\def\be{\begin{eqnarray*}}
\def\bel{\begin{eqnarray}}
\def\ee{\end{eqnarray*}}
\def\eel{\end{eqnarray}}
\def\N{\mathbb{N}}
\def\P{\mathbb{P}}
\def\C{\mathbb{C}}
\def\C{\mathcal{C}}
\def\Z{\mathbb{Z}}
\def\U{\mathcal{U}}
\def\1{(\begin{bf}0\end{bf},o)}
\def\2{(\begin{bf}0\end{bf},e)}
\def\vec{\left(\begin{array}{cc} }
\def\cev{\end{array} \right)}
\theoremstyle{plain}
\newtheorem{theo}{Theorem}[section]
\newtheorem{proposition}[theo]{Proposition}
\theoremstyle{definition}
\title{A note on the Poisson boundary of lamplighter random walks}
\author{Ecaterina Sava\footnote{Address: Institut f\"{u}r Mathematische Strukturtheorie, TU Graz,
Steyrergasse 30, 8010 Graz, Austria
\textit{e-mail address}: sava@tugraz.at}}
\begin{document}
\maketitle

\begin{abstract}
The main goal of this paper is to determine the Poisson boundary of lamplighter random walks over a general class of discrete groups $\Gamma$ endowed with a ``rich'' boundary. The starting point is the Strip Criterion of identification of the Poisson boundary for random walks on discrete groups due to Kaimanovich \cite{Kaimanovich2000}. A geometrical method for constructing the strip as a subset of the lamplighter group $\Z_{2}\wr\Gamma$ starting with a ``smaller'' strip in the group $\Gamma$ is developed. Then, this method is applied to several classes of base groups $\Gamma$: groups with infinitely many ends, hyperbolic groups in the sense of Gromov, and Euclidean lattices. We show that under suitable hypothesis the Poisson boundary for a class of random walks on lamplighter groups is the space of infinite limit configurations.
\paragraph{Keywords}
\end{abstract}
\noindent
\textbf{Keywords}: Random walk, wreath product, lamplighter group, Poisson boundary.\\

\noindent
\textbf{Mathematics Subject Classification (2000)}: 60J50, 60B15, 05C05, 20E08\\

\section{Introduction}
Let $\Gamma$ be a finitely generated group, and imagine a lamp sitting at each group element. For simplicity, we consider that the lamps have only two states: $0$ (the lamp is swiched off) or $1$ (the lamp is swiched on), and initially all lamps are off. We think of a lamplighter person moving randomly in $\Gamma$ and switching randomly lamps on or off. We investigate the following model: at each step the lamplighter may walk to some random neighbour vertex, and may change the state of some lamps in a bounded neighbourhood of his position. This model can be interpreted as a random walk on the wreath product $(\Z/2\Z)\wr\Gamma$ governed by a probability measure $\mu$. The random walk is described by a transient Markov chain $Z_{n}$, which represents the random position of the lamplighter and the random configuration of the lamps at time $n$. We assume that the lamplighter random walk's projection on the base group $\Gamma$ is transient. Write $\Z_{2}:=\Z/2\Z$ and $G:=\Z_{2}\wr\Gamma$.

Transience of the projected random walk on $\Gamma$ implies that almost every path of the original random walk $Z_{n}$ on $G$ will leave behind a certain (infinitely supported) limit configuration on $\Gamma$. It is then natural to ask whether this limit configurations describe completely the behaviour of the random walk $Z_{n}$ at infinity.

For a more topological viewpoint, we attach to $G=\Z_{2}\wr\Gamma$ a natural boundary $\Omega$ at infinity, such that $G\cup\Omega$ is a metrizable space (not necessarily compact or complete) on which $G$ acts by homeomorphisms and every point in $\Omega$ is an accumulation point of a sequence in $G$. We then show that, in this topology, the random walk $Z_{n}$ converges almost surely to an $\Omega$-valued random variable, under the assumption that the projected random walk on $\Gamma$ converges to the boundary. If we denote by $\mu_{\infty}$ the limit distribution of $Z_{n}$ on $\Omega$, then the measure space $(\Omega,\mu_{\infty})$ provides a model for the behaviour at infinity of the random walk $Z_{n}$. We are interested if this space is maximal, i.e. there is no way (up to sets of measure $0$) of further refining this space. This maximal space is called the \textit{Poisson boundary} of the random walk. 

The Poisson boundary of a random walk on a group is a measure-theoretical space, which describes completely the significant behaviour of the random walk at infinity. Another way of defining the Poisson boundary is to say that it is the space of ergodic components of the time shift in the trajectory space. 

In order to prove that the measure space $(\Omega,\mu_{\infty})$ is indeed the Poisson boundary of the random walk $Z_{n}$, we shall use the very useful Strip Criterion of identification of the Poisson boundary due to Kaimanovich, which we state here in the most general form. For details see Kaimanovich \cite[Thm. $6.5$ on p. 677]{Kaimanovich2000} and \cite[Thm. $5.19$]{KaimanovichWoess2002}.
\begin{proposition}[\textbf{Strip Criterion}]\label{StripCriterion}
Let $\mu$ be a probability measure with finite first moment on $G$, and let $(B_{+},\lambda_{+})$ and $(B_{-},\lambda_{-})$ be $\mu$- and $\check{\mu}$-boundaries, respectively. If there exists a measurable $G$-equivariant map $S$ assigning to almost every pair of points $(b_{-},b_{+})\in B_{-} \times B_{+}$ a non-empty ``strip'' $ S(b_{-},b_{+})\subset G$, such that, for the ball $B(id,n)$ of radius $n$ in the metric of $G$,
\begin{equation*}
\frac{1}{n}\log| S(b_{-},b_{+})\cap B(id,n) | \to 0 ,\ \mbox{as}\ n\to\infty,
\end{equation*}
for $(\lambda_{-} \times \lambda_{+})$-almost every $(b_{-},b_{+})\in B_{-} \times B_{+}$, then $(B_{+},\lambda_{+})$ and $(B_{-},\lambda_{-})$ are the Poisson boundaries of the random walks $(G,\mu)$ and $(G,\check{\mu})$, respectively.
\end{proposition}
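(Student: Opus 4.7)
The plan is to combine the Entropy Criterion of Kaimanovich--Vershik with a conditional-density estimate provided by the strips. Recall that the Entropy Criterion characterises the Poisson boundary by the vanishing of a conditional entropy: a $\mu$-boundary $(B_+,\lambda_+)$ coincides with the Poisson boundary of $(G,\mu)$ precisely when
\[
\lim_{n\to\infty}\tfrac{1}{n}\,\E\bigl[-\log \pi_n^{b_+}(Z_n)\bigr]=0,
\]
where $\pi_n^{b_+}$ denotes the conditional distribution of $Z_n$ given the limit point $b_+$. My goal is to use the strips to bound this conditional entropy by the subexponential quantity $\tfrac{1}{n}\log|S(b_-,b_+)\cap B(id,n)|$; a symmetric argument applied to the reverse walk then handles $(B_-,\lambda_-)$ simultaneously.

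First, I would set up a stationary joint distribution on pairs $(b_-,b_+)\in B_-\times B_+$ with marginals $\lambda_-$ and $\lambda_+$, by running the walk bi-infinitely and coupling the two boundary convergences (here is where the assumption that $(B_-,\lambda_-)$ is a $\check{\mu}$-boundary enters). Second, since conditioning on a finer $\sigma$-algebra can only reduce entropy, it suffices to prove
\[
\lim_{n\to\infty}\tfrac{1}{n}\,\E\bigl[-\log \pi_n^{b_-,b_+}(Z_n)\bigr]=0,
\]
where $\pi_n^{b_-,b_+}$ is the conditional distribution of $Z_n$ given both endpoints. Third, the $G$-equivariance of the strip map allows one to translate $S(b_-,b_+)$ along the trajectory and to show, via a Fubini-type averaging argument against the stationary joint law, that conditionally on $(b_-,b_+)$ the walk $Z_n$ is essentially concentrated on $S(b_-,b_+)$. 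Combined with the finite first-moment hypothesis, which via Kingman's subadditive ergodic theorem yields $d(id,Z_n)\le Cn$ almost surely, the conditional distribution of $Z_n$ is supported in $S(b_-,b_+)\cap B(id,Cn)$ up to negligible mass. The conditional entropy is then bounded above by $\log|S(b_-,b_+)\cap B(id,Cn)|$, and the strip hypothesis delivers the required vanishing.

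The hard part is the third step: quantifying how conditioning on $(b_-,b_+)$ concentrates $Z_n$ on the strip. One only has the qualitative information that $S(b_-,b_+)$ is nonempty for a.e.\ pair, and converting this into a quantitative entropy bound requires the $G$-equivariance of $S$ in a delicate way --- essentially, one averages the indicator $\mathbf{1}[g\in S(b_-,b_+)]$ over translates of the walk and appeals to stationarity of the two-sided extension to transfer nonemptiness $\lambda_-\times\lambda_+$-a.s.\ into a nontrivial lower bound on the conditional density. This is the technical core of Kaimanovich's original argument, and it is the only point at which the subexponential growth (rather than mere finiteness or boundedness) of the strip is essential.
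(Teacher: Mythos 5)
The paper offers no proof of this proposition: it is imported verbatim from Kaimanovich \cite{Kaimanovich2000} and Kaimanovich--Woess \cite{KaimanovichWoess2002}, so there is nothing internal to compare your argument against. Judged on its own, your outline has the right general shape (entropy criterion plus a localization of the conditioned walk near the strip), but two of its steps are genuinely defective.

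First, the reduction in your second step is backwards. Conditioning on the finer $\sigma$-algebra generated by $(b_-,b_+)$ can only \emph{decrease} the conditional entropy, so $\frac1n\,\E\bigl[-\log\pi_n^{b_-,b_+}(Z_n)\bigr]\to0$ does not, by monotonicity alone, imply $\frac1n\,\E\bigl[-\log\pi_n^{b_+}(Z_n)\bigr]\to0$, which is what the entropy criterion actually requires. What rescues this step in Kaimanovich's argument is not monotonicity but \emph{equality}: in the bilateral extension of the walk, $b_-$ is measurable with respect to the increments of negative index and is therefore independent of the pair $(Z_n,b_+)$, whence $H(Z_n\mid b_-,b_+)=H(Z_n\mid b_+)$. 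Without invoking that independence explicitly, your step is a non sequitur.

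Second, the ``technical core'' you acknowledge and then defer is the proof, not a detail. The correct mechanism is not that $Z_n$ concentrates \emph{on} the strip, but that $d\bigl(Z_n,S(b_-,b_+)\bigr)=o(n)$ in probability: one recenters the bilateral walk at time $n$, uses the equivariance $S(Z_n^{-1}b_-,Z_n^{-1}b_+)=Z_n^{-1}S(b_-,b_+)$ together with stationarity of the bilateral measure under the time shift $T$, and applies the elementary fact that $\frac1n f(T^n\omega)\to0$ in probability for any a.e.\ finite measurable $f$ --- here $f$ is the distance from $id$ to the strip, finite precisely because the strip is nonempty almost surely. Only after that does subexponential growth enter, to bound $\log\bigl|N_{\epsilon n}(S(b_-,b_+))\cap B(id,Cn)\bigr|$ by $o(n)+C'\epsilon n$ and hence the conditional entropy. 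Your sketch conflates these two distinct roles (nonemptiness plus stationarity give the sublinear distance; growth gives the entropy bound) and carries out neither. If the core is to be cited from Kaimanovich, it is more honest to cite the whole proposition, as the paper does.
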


This criterion was applied by Kaimanovich to groups with sufficiently rich geometric boundaries, for which such strips have a natural geometric interpretation.

We shall give a general method for constructing the strip $S$ as a subset of the lamplighter group $\Z_{2}\wr \Gamma$, with the properties required in the Proposition \ref{StripCriterion}. This method requires that the Strip Criterion can be applied to the random walk on $\Gamma$. Also, some additional assumptions are required. The method can be applied to a large class of base groups $\Gamma$, which are endowed with a sufficiently rich boundary, so that the random walk on $\Gamma$ converges to this boundary. The important fact here is that the basic geometry for the lamplighter group $\Z_{2}\wr \Gamma$ is provided by the underlying structure $\Gamma$. We shall explain how this method works when $\Gamma$ is a group with infinitely many ends, a hyperbolic group, or a Euclidean lattice.

The paper is organized as follows. In Section \ref{sec:Lamplighter} we recall some definitions and basic properties of the main objects of study (lamplighter groups and random walks, wreath products). In Section \ref{sec:TheNaturalBoundary}, we attach both to the group $\Gamma$ and $\Z_{2}\wr\Gamma$ certain boundaries, which satisfy some required assumptions. Under the condition that the random walk on $\Gamma$ converges to the boundary almost surely, we prove that the random walk $Z_{n}$ on $\Z_{2}\wr\Gamma$ converges also to the boundary. In Section \ref{sec:The Poisson boundary}, we shall apply the Strip Criterion \ref{StripCriterion} in order to determine the Poisson boundary of random walks over a general class of groups $\Z_{2}\wr\Gamma$. We shall explain here the \textit{half-space method} for constructing a strip as a subset of $\Z_{2}\wr\Gamma$. The general procedure is based on the fact that the strip in the base group $\Gamma$ has additional ``nice'' properties, which help us to lift it to a bigger strip. We shall prove that the strip satisfies the required properties in Proposition \ref{StripCriterion}. Finally, we shall consider some typical examples of groups $\Gamma$, which are endowed with nice geometric boundaries, so that random walks on $\Gamma$ converge to this boundary. For this specific examples, we shall apply the \textit{half-space method}. 

Concluding the introduction, let us remark that the first to show that lamplighter groups are fascinating objects in the study of random walks were Kaimanovich and Vershik \cite{KaimanovichVershik1983}. By now, there is a considerable amount of literature on this topic. The paper of Kaimanovich \cite{KaimanovichVershik1983} may serve as a major source for the earlier literature. See also Lyons, Pemantle and Peres \cite{Lyons1996}, Erschler \cite{Erschler2001,Erschler2003}, Revelle \cite{Revelle,Revelle2003}, Pittet and Saloff-Coste \cite{Pittet&Saloff-Coste1996,Pittet&Saloff-Coste2002}, Grigorchuk and Zuk \cite{GrigorchukZuk2001}, Dicks and Schick \cite{DicksSchick2002}, Bartholdi and Woess \cite{BartholdiWoess2005}, Brofferio and Woess \cite{Brofferio 2006}.

\section{Lamplighter groups and random walks}\label{sec:Lamplighter}
\paragraph{Lamplighter groups.}Consider an infinite group $\Gamma$, generated by a finite set $S_{\Gamma}$. Denote by $e$ the identity element, and by $d(\cdot,\cdot)$ the word metric on $\Gamma$, that is, the length of the shortest path between two elements in the Cayley graph of $\Gamma$ (with respect to $S$).

Imagine a lamp sitting at each element of $\Gamma$, which can be switched off or on (encoded by 0 and 1). We think of a lamplighter person moving randomly in $\Gamma$ and switching randomly lamps on or off. At every moment of time the lamplighter will leave behind a certain configuration of lamps. The configurations of lamps are encoded by functions $\eta:\Gamma\rightarrow \Z_{2}$. We write $\hat{\mathcal{C}}=\{\eta : \Gamma\rightarrow\Z_{2}\}$ for the set of all configurations, and let $\mathcal{C} \subset \hat{\mathcal{C}}\ $ be the set of all finitely supported configurations, where a configuration is said to have finite support if the set $ supp(\eta)=\{x \in \Gamma : \eta(x)\ne  0  \} $ is finite. Denote by $\bf{0}\bf$ the zero configuration, i.e. the configuration which corresponds to all lamps switched off, and by $\delta_{x}$ the configuration where only the lamp at $x\in\Gamma$ is on and all other lamps are off.

Recall that the \textit{wreath product} of the groups $\Z_{2}$ and $\Gamma$ is a semidirect product of $\Gamma$ and the direct sum of copies of $\Z_{2}$ indexed by $\Gamma$, where every $x\in\Gamma$ acts on $\sum_{x\in\Gamma}\Z_{2}$ by the translation $T_{x}$ defined as 
\begin{equation*}
(T_{x}\eta)(y)=\eta(x^{-1}y),\forall y\in \Gamma.
\end{equation*}
Let $G:=\Z_{2}\wr\Gamma$ denote the \textit{wreath product}. The elements of $G$ are pairs of the form $(\eta,x)\in\C\times \Gamma$, where $\eta$ represents a (finitely supported!) configuration of the lamps and $x$ the position of the lamplighter. A group operation on $G$ is given by
\begin{equation*}
(\eta,x)(\eta^{'},x^{'})=(\eta\oplus T_{x}\eta^{'},xx^{'}),
\end{equation*}
where $x,x^{'}\in\Gamma$, $\eta,\eta^{'}\in\C$, $\oplus$ is the componentwise addition modulo $2$. The group identity is $(\textbf{0},e)$. We shall call $G$ together with this operation the \textit{lamplighter group} over $\Gamma$.
\paragraph{Lamplighter distance.}When $S_{\Gamma}$ is a generating set for $\Gamma$, then a natural set of generators for $G=\Z_{2}\wr\Gamma$ is given by 
\begin{equation*}
S_{G}=\{(\delta_{e},e),(\textbf{0},s):s\in S_{\Gamma}\}.
\end{equation*}
Consider the Cayley graph of $G$ with respect to the generating set $S_{G}$. We lift the word metric $d(\cdot,\cdot)$ on $\Gamma$ to a metric $d_{G}(\cdot,\cdot)$ on $G$ by assigning the following distances (lenghts) to the elements of $S_{G}$: $d_{G}((\textbf{0},e),(\textbf{0},s)):=1$ for $s\in S_{\Gamma}$ and $d_{G}((\textbf{0},e),(\delta_{e},e)):=c> 0$, where $c$ is some arbitrary, but fixed positive constant. Then the distance $d_{G}((\eta,x),(\eta^{'},x^{'}))$ between $(\eta,x)$ and $(\eta^{'},x^{'})$ is the length of the shortest path in the Cayley graph of $G$ joining these two vertices. More precisely, if we denote by $l(x,x^{'})$ the smallest length of a ``travelling salesman'' tour from $x$ to $x^{'}$ that visits each element of the set $\eta \bigtriangleup \eta ^{'}$ (where the two configurations are different), then   
\begin{equation}\label{GraphGmetric}
d_{G}((\eta,x),(\eta^{'},x^{'}))=l(x,x^{'})+c\cdot|\eta^{'}\bigtriangleup\eta|
\end{equation}
defines a metric on $G$.
\paragraph{Lamplighter random walks.}Let $\mu$ be a probability measure on $G$, such that $supp(\mu)$ generates $G$ as a group. Consider the random walk $Z_{n}$ on $G$ with one-step transition probabilites given by $p((\eta,x),(\eta^{'},x^{'}))=\mu((\eta,x)^{-1}(\eta^{'},x^{'}))$, starting at the identity $\2$. We shall call $Z_{n}$ the \textit{lamplighter random walk} over the base group $\Gamma$ and with law $\mu$. The lamplighter random walk starting at $\2$ can also be described by a sequence of $G$-valued random variables $Z_{n}$ in the following way:
\begin{equation}\label{lamplighter random variables}
 Z_{0}:=\2,\ Z_{n}=Z_{n-1}i_{n},\mbox{ for all } n\geq1,
\end{equation}
where $i_{n}$, with $i_{n}=(f_{n},z_{n})$, is a sequence of i.i.d. $G$-valued random variables governed by the probability measure $\mu$.
   
We write $Z_{n}=(\eta_{n},X_{n})$, where $\eta_{n}$ is the random configuration of lamps at time $n$ and $X_{n}$ is the random element of $\Gamma$ at which the lamplighter stands at time $n$. Therefore, the projection of $Z_{n}=(\eta_{n},X_{n})$ on $\Gamma$ is the random walk $X_{n}$ starting at the identity $e$ and with law 
\begin{equation*}
\nu(x)=\sum_{\eta\in\C}\mu(\eta,x).
\end{equation*}
The law $\nu$ of a random walk on $\Gamma$ is said to have \textit{finite first moment} if 
\begin{equation*}
\sum_{x\in \Gamma}d(e,x)\nu(x)<\infty.
\end{equation*} 
As a \textit{general assumption}, we assume the transience of $X_{n}$. By transience of $X_{n}$, every finite subset of $\Gamma$ is left with probability one after a finite time. Therefore, the sequence $(\eta_{n})_{n\in\N_{0}}$ of configurations converges pointwise to a random limit configuration $\eta_{\infty}$, which is not necessarily finitely supported. Now a natural question is whether the behaviour of the random walk at infinity is completely described by these limit configurations. For this purpose, a notion of "infinity" for the lamplighter group is needed.

\section{Convergence to the boundary}\label{sec:TheNaturalBoundary}

\paragraph{The boundary of $\Gamma$.}Consider the base group $\Gamma$ as above, with the word metric $d(\cdot,\cdot)$ on it. Let $\widehat{\Gamma}=\Gamma\cup\partial\Gamma$ be an extended space (not necessarily compact) with ideal \textit{boundary} $\partial\Gamma$ (the set of points at infinity), such that $\widehat{\Gamma}$ is compatible with the group structure on $\Gamma$ in the sense that the action of $\Gamma$ on itself extends to an action on $\widehat{\Gamma}$ by homeomorphisms. 

\paragraph{3.1} \label{sec:BasicAssumptions}\textbf{Basic assumptions.} Returning to the random walk $X_{n}$ on $\Gamma$, assume that:
\begin{description}
\item(a) The law $\nu$ of the random walk $X_{n}$ has finite first moment on $\Gamma$.
\item(b) The random walk $X_{n}$ converges almost surely to a random element of $\partial\Gamma$: there is a  	 $\partial\Gamma$-valued random variable $X_{\infty}$ such that in the topology of $\widehat{\Gamma}$,
\begin{equation*}
 \lim_{n\to\infty}X_{n}=X_{\infty},\mbox{ almost surely for every starting point } x\in\Gamma.
\end{equation*}
\item(c) The boundary $\partial\Gamma$ is such that the following \textbf{convergence property} holds: whenever $(x_{n}),(y_{n})$ are sequences in $\Gamma$ such that $(x_{n})$ accumulates at $\xi\in \partial\Gamma$ and 
\begin{equation}\label{Convergence}
\tag{\textbf{CP}}
d(x_{n},y_{n})/d(x_{n},e)\to 0\,\mbox{ as }n\to\infty, 
\end{equation} 
then $(y_{n})$ accumulates also at $\xi$.
\end{description}
\paragraph{The boundary of $G=\Z_{2}\wr\Gamma$.}Remark that the natural compactification of $\mathcal{C}$ in the topology of pointwise convergence is the set $\widehat{\mathcal{C}}$ of all, finitely or infinitely supported configurations. Since the vertex set of the Cayley graph of $G$ is $\mathcal{C}\times\Gamma$, the space $\partial G=(\widehat{\mathcal{C}}\times \widehat{\Gamma})\setminus (\mathcal{C} \times\Gamma)$ is a \textit{natural boundary} at infinity for $G$. Let us write $\widehat{G}=\widehat{\mathcal{C}} \times \widehat{\Gamma}$.

The boundary $\partial G$ contains many points towards the lamplighter random walk $Z_{n}$ does not converge, as we shall later see. For this reason, we define a ``smaller'' boundary $\Omega$ for the lamplighter group (which is still dense in $\partial G$), and we shall prove that the random walk $Z_{n}$ converges to a random variable with values in $\Omega$. Define
\begin{equation}\label{omega}
\Omega =\bigcup_{\mathfrak{u} \in \partial\Gamma}\mathcal{C}_{\mathfrak{u}}\times \{\mathfrak{u}\}, 
\end{equation}
where a configuration $\zeta$ is in $\mathcal{C}_{\mathfrak{u}}$ if and only if $\mathfrak{u}$ is its only accumulation point (i.e., there may be infinitely many lamps switched on only in a neighbourhood of $\mathfrak{u}$) or if $\zeta$ is finitely supported. The set $\mathcal{C}_{\mathfrak{u}}$ is dense in $\widehat{\mathcal{C}}$ because $\mathcal{C}\subset\mathcal{C}_{\mathfrak{u}}$ and $\mathcal{C}$ is dense in $\widehat{\mathcal{C}}$. Hence, $\Omega$ is also dense in $\partial G$.

The action of $G=\Z_{r}\wr\Gamma$ on itself extends to an action on $\widehat{G}$ by homeomorphisms and leaves the Borel subset $\Omega\subset\partial G$ invariant. If we take $(\eta,x)\in G$ and $(\zeta,\mathfrak{u}) \in \Omega$, then
\begin{equation*} \label{action}
(\eta,x)(\zeta,\mathfrak{u})=(\eta \oplus T_{x}\zeta,x \mathfrak{u}).
\end{equation*}
If $\mathfrak{u}\in\partial\Gamma$ and $\zeta$ is finitely supported or accumulates only at $\mathfrak{u}$, then $T_{x}\zeta$ can at most accumulate at $x\mathfrak{u}$. Also the configuration $\eta\oplus T_{x}\zeta$ accumulates again at most at $x\mathfrak{u}$ because $\eta$ is finitely supported, so that adding $\eta$ modifies $T_{x}\zeta$ only in finitely many points.

When the \textbf{Basic assumptions $(3.1)$} hold, we are able to state the first result on convergence of the lamplighter random walk $Z_{n}$ to $\Omega$.
\begin{theo}\label{ConvergenceTheorem}
Let $Z_{n}=(\eta_{n},X_{n})$ be a random walk with law $\mu$ on the group $G=\Z_{r}\wr\Gamma$ such that $supp(\mu)$ generates $G$. If $\Omega$ is defined as in \eqref{omega} and $\mu$ has finite first moment, then there exists an $\Omega$-valued random variable $Z_{\infty}=(\eta_{\infty},X_{\infty})$ such that $Z_{n}\to Z_{\infty}$ almost surely, for every starting point. Moreover the distribution of $Z_{\infty}$ is a continuous measure on $\Omega$.
\end{theo}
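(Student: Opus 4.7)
I split the proof into three parts: pointwise convergence $\eta_n\to\eta_\infty$; verification that the pair $(\eta_\infty,X_\infty)$ lies in $\Omega$ almost surely; and continuity of its distribution on $\Omega$.

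The existence of $X_\infty$ is given by Basic Assumption (b), so only the lamp component requires attention for the first part. For a fixed $y\in\Gamma$, I bound the number $N_y$ of steps at which the lamp at $y$ is toggled, i.e.\ the number of $n$ with $X_{n-1}^{-1}y\in\mathrm{supp}(f_n)$. Since $f_n$ is independent of $\mathcal{F}_{n-1}$ and the metric formula \eqref{GraphGmetric} gives $|\mathrm{supp}(f_n)|\le d_G(\mathrm{id},i_n)/c$, which is integrable because $\mu$ has finite first moment, one obtains
\[
\E[N_y]\;=\;\sum_{g\in\Gamma}F(g)\,\mathcal{G}(e,yg^{-1})\;\le\;\mathcal{G}(e,e)\sum_{g}F(g)\;<\;\infty,
\]
where $F(g)=\P(g\in\mathrm{supp}(f_1))$ and $\mathcal{G}$ is the Green function of $X_n$, bounded by $\mathcal{G}(e,e)<\infty$ thanks to transience. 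Hence $N_y<\infty$ a.s., $\eta_n(y)$ stabilizes, and $\eta_n\to\eta_\infty$ pointwise.

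The key step is to prove $\eta_\infty\in\mathcal{C}_{X_\infty}$ a.s. Suppose $\xi\in\partial\Gamma$ were an accumulation point of $\eta_\infty$ and choose $Y_k\in\mathrm{supp}(\eta_\infty)$ with $Y_k\to\xi$; then $d(e,Y_k)\to\infty$. Let $n_k$ be the last step at which $Y_k$ is toggled, so $Y_k=X_{n_k-1}y_k$ for some $y_k\in\mathrm{supp}(f_{n_k})$. Since only finitely many lamps can be toggled up to any bounded time, necessarily $n_k\to\infty$, which by (b) forces $X_{n_k-1}\to X_\infty$. Setting $D_n:=\max\{d(e,y):y\in\mathrm{supp}(f_n)\}\le d_G(\mathrm{id},i_n)$, finite first moment of $\mu$ together with $\sum_n\P(D_1>\varepsilon n)\le\E[D_1]/\varepsilon<\infty$ yields $D_n/n\to 0$ a.s.\ by Borel--Cantelli. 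Combined with $d(e,X_n)\to\infty$, this lets me argue that $d(X_{n_k-1},Y_k)=d(e,y_k)\le D_{n_k}$ is negligible compared to $d(e,X_{n_k-1})$, so (CP) applied to the pair $(X_{n_k-1},Y_k)$ forces $Y_k\to X_\infty$, whence $\xi=X_\infty$.

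For the continuity of $\mu_\infty$, I use the stationarity $\mu_\infty=\mu*\mu_\infty$. If the supremum $m^*$ of atomic masses were positive, attained at some $z^*\in\Omega$, then $m^*=\sum_g\mu(g)\mu_\infty(\{g^{-1}z^*\})\le m^*$ forces $\mu_\infty(\{g^{-1}z^*\})=m^*$ for every $g\in\mathrm{supp}(\mu)$; iterating and using that $\mathrm{supp}(\mu)$ generates $G$, every element of the $G$-orbit of $z^*$ must be an atom of mass $m^*$. A direct inspection of the action $(\eta,x)(\zeta,\mathfrak{u})=(\eta\oplus T_x\zeta,x\mathfrak{u})$ shows every $G$-orbit in $\Omega$ is countably infinite (already varying $\eta$ over $\mathcal{C}$ produces an infinite coset), contradicting $\mu_\infty(\Omega)=1$ unless $m^*=0$. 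The main expected obstacle is the a.s.\ estimate $D_{n_k}/d(e,X_{n_k-1})\to 0$ in the (CP) step: when $X_n$ has positive drift it is immediate from $D_n/n\to 0$, but in a zero-drift regime one must exploit the integrability of $D_1$ more delicately, together with the particular last-toggling-time structure of $n_k$.
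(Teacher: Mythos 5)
Your overall strategy coincides with the paper's: the heart of both arguments is to apply the convergence property \eqref{Convergence} to the pair consisting of the lamplighter's position $X_{n_k-1}$ and the site of a lamp being switched, so that every accumulation point of $supp(\eta_{\infty})$ is forced to coincide with $X_{\infty}$. Two of your three parts are, however, carried out differently. For the pointwise stabilization of $\eta_{n}$ your Green-function bound $\E[N_{y}]\le \mathcal{G}(e,e)\,\E[|supp(f_{1})|]<\infty$ is more careful than the paper's one-line appeal to transience (and some such argument is genuinely needed, since the set of positions from which the lamp at $y$ can be toggled is not a fixed finite set when $supp(f_{n})$ is unbounded). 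For continuity of $\mu_{\infty}$ you use the standard maximal-atom argument for the stationary measure $\mu_{\infty}=\mu\ast\mu_{\infty}$ combined with the observation that every $G$-orbit in $\Omega$ is infinite, whereas the paper first reduces to continuity of $\nu_{\infty}$ on $\partial\Gamma$ and, in the atomic case, derives a contradiction from the equivariance $T_{y}\phi_{x}=\phi_{xy}$ of the family of limit configurations. Your version is cleaner and self-contained; just phrase the iteration as ``the finite, nonempty set of atoms of maximal mass satisfies $g^{-1}A\subseteq A$ for every $g\in supp(\mu)$, hence $g^{-1}A=A$ by finiteness, hence $A$ is $G$-invariant,'' so that you obtain genuine group invariance rather than only invariance under the semigroup generated by $supp(\mu)$.

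The one step you explicitly leave open --- the almost sure estimate $D_{n_k}/d(e,X_{n_k-1})\to 0$ needed to invoke \eqref{Convergence} --- is precisely where the paper supplies the missing ingredient: by Kingman's subadditive ergodic theorem $d(X_{n},e)/n\to m$ almost surely, the paper asserts $m>0$, and together with $D_{n}/n\to 0$ this gives the required ratio at once. You should close your proof the same way. Be aware, though, that $m>0$ is not a consequence of the Basic Assumptions alone: Kingman only yields $m\ge 0$, and transient walks of zero speed exist (e.g.\ centered walks on $\Z^{3}$, for which admittedly assumption (b) would also fail for the sphere at infinity). Positive speed does hold in every situation to which the paper applies the theorem --- infinitely-ended and nonelementary hyperbolic base groups are nonamenable, and $\Z^{d}$ is treated only with nonzero drift --- so the honest repair, for you and arguably for the paper, is either to add $m>0$ as a standing hypothesis or to verify it in each application.
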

\begin{proof}
Without loss of generality, we may suppose that the starting point for the lamplighter random walk $Z_{n}$ is $id=\2$, that is, we start a random walk in the identity element $e$ of $\Gamma$ with all the lamps switched off.

The support of $\mu$ generates $G=\Z_{r}\wr\Gamma$ as a group, therefore, also the law $\nu$ of the projected random walk $X_{n}$ on $\Gamma$ is such that its support generates $\Gamma$. By assumption, the random walk $X_{n}$ on $\Gamma$ is transient, and converges almost surely to a random variable $X_{\infty}\in\partial\Gamma$. 

Now, assume that the lamplighter random walk $Z_{n}$ has finite first moment. Let $(y_{n})$ be an unbounded sequence of elements in $\Gamma$, such that $y_{n}\in supp(T_{X_{n-1}}f_{n})$ ($f_{n}$ is a finitely supported configuration), for each $n$, that is, $y_{n}$ is a group element where the lamp is switched on. Since, by assumptions, the law $\nu$ of the random walk $X_{n}$ on $\Gamma$ has finite first moment, the following holds with probability $1$:
\begin{equation*}
d(y_{n},X_{n-1})/{n}\to 0,\mbox{ as }n\to\infty.
\end{equation*}
Moreover, it follows from \textit{Kingman's subadditive ergodic theorem} (see Kingman\cite{Kingman1968}) that there exists finite constant  $m>0$ such that
\begin{equation*}
d(X_{n},e)/n\to m,\ \mbox{as}\ n\to\infty,\ \mbox{almost surely}.
\end{equation*}
Using the last two equations and the triangle inequality, we have $d(X_{n},y_{n})/d(X_{n},e)\to 0$, as $n\to\infty$. Recall now that the boundary $\partial\Gamma$ satisfies the convergence property \eqref{Convergence} and $X_{n}\to X_{\infty}\in\partial\Gamma$. Therefore, the sequence $(y_{n})$ accumulates at $X_{\infty}$. Now, from the definition of the group operation on $G$ and the equation \eqref{lamplighter random variables}, one can remark that the configuration $\eta_{i}$ of lamps at every moment of time $i$ is obtained by adding (componentwise addition modulo 2) to the configuration $\eta_{i-1}$ the configuration $T_{X_{i-1}}f_{i}$  (where $f_{i}$  is finitely supported). Hence
\begin{equation*}
supp(\eta_{n})\subset \bigcup_{i=1}^{n}supp(T_{X_{i-1}}f_{i}),
\end{equation*}
which is a union of finite sets. From the above, the sequence $y_{n}\in supp(T_{X_{n-1}}f_{n})$ accumulates at $X_{\infty}$, therefore the sequence $supp(\eta_{n})$ must accumulate at $X_{\infty}$. That is, the random configuration $\eta_{n}$ converges pointwise to a configuration $\eta_{\infty}$, which accumulates at $X_{\infty}$ and $Z_{n}=(\eta_{n},X_{n})$ converges to a random element $Z_{\infty}=(\eta_{\infty},X_{\infty})\in\Omega$.

When the limit distribution of $X_{n}$ is a continuous measure on $\partial\Gamma$ (i.e., it carries no point mass), then the same is true for the limit distribution of $Z_{n}=(\eta_{n},X_{n})$ on $\Omega$. Otherwise, supposing that there exists some single point of $\Omega$ with non-zero measure, then one comes to a contradiction finding some single point in $\partial\Gamma$ with non-zero measure, which is not true because of the continuity of the limit distribution on $\partial\Gamma$.

Even when the limit distribution $\nu_{\infty}$ of $X_{n}$ is not a continuous measure on $\partial\Gamma$, the limit distribution of $Z_{n}$ is still continuous. When the measure $\nu_{\infty}$ is not continuous, there exists $\mathfrak{u}\in\partial\Gamma$, with $\nu_{\infty}(\{\mathfrak{u}\})=\mathbb{P}[X_{n}=\mathfrak{u}|X_{o}=e]>0$. Assume that the limit distribution $\mu_{\infty}$ of $Z_{n}$ is not continuous. Then, there is a configuration $\phi$ such that, the limit configuration of the lamplighter random walk $Z_{n}$ is $\phi$. Then, for every $x\in\Gamma$, all trajectories of the random walk $X_{n}$ starting at $x$ and converging to the deterministic boundary element $\mathfrak{u}$ will have the same limit configuration $\phi_{x}$, accumulating only at $\mathfrak{u}$. Note that the group $\Gamma$ acts also on the space of limit configurations by translations, and for every $y\in supp(\nu)$, $T_{y}\phi_{x}=\phi_{xy}$. Since the support of $\nu$ generates $\Gamma$, this can not happen. Therefore, the distribution of $Z_{n}$ is a continuous measure. One can also prove the continuity of the limit distribution using Borel-Cantelli lemma.
\end{proof}

\section{The half-space method and the Poisson boundary of the lamplighter random walk}\label{sec:The Poisson boundary}

Under the assumptions of Theorem \ref{ConvergenceTheorem}, let $\mu_{\infty}$ be the distribution of $Z_{\infty}$ on $\Omega$, given that the position of the random walk $Z_{n}$ at time $n=0$ is $id=\2$. This is a probability measure on $\Omega$ defined for Borel sets $U\subset\Omega$ by
\begin{equation*}
\mu_{\infty}(U)=\P[Z_{\infty}\in U\vert Z_{0}=\2].
\end{equation*}
The measure $\mu_{\infty}$ is a \textit{harmonic measure} for the random walk $Z_{n}$ with law $\mu$, that is, it satisfies the convolution equation $\mu \ast\mu_{\infty}=\mu_{\infty}$. Since $G$ acts on $\Omega$ by measurable bijections and the measure $\mu_{\infty}$ is stationary with respect to $\mu$, it follows that $(\Omega,\mu_{\infty})$ is a \textit{$\mu$-boundary} (or a \textit{Furstenberg boundary}) for the random walk $Z_{n}$ with law $\mu$, in the sense of Furstenberg \cite{Furstenberg}. There exists a maximal $\mu$-boundary, which is called the \textit{Poisson boundary} for the random walk $Z_{n}$.

The typical situation when a $\mu$-boundary $(B,\lambda)$ can arise is when $B$ is a certain topological or combinatorial boundary of a group $G$, and almost all paths of the random walk $Z_{n}$ with law $\mu$ on $G$ converge (in a certain sense which needs to be specified in each particular case) to a limit point $Z_{\infty}\in B$. Then the space $B$ considered as a measure space with the resulting hitting distribution $\lambda$ (the \textit{harmonic measure} on $B$) is a $\mu$-boundary of the random walk $Z_{n}$.

We want to know if the measure space $(\Omega,\mu_{\infty})$ is indeed maximal. In order to check the maximality of the $\mu$-boundary, we use the very useful Strip Criterion \ref{StripCriterion} of identification of the Poisson boundary due to Kaimanovich \cite[Thm. $6.5$ p. 677]{Kaimanovich2000} and \cite[Thm. $5.19$]{KaimanovichWoess2002}. This criterion is symmetric with respect to the time reversal and leads to a simultaneous identification of the Poisson boundary of the random walk and of the reflected random walk, respectively. Consider now the \textit{reflected random walk} $\check{Z}_{n}=(\check{\eta}_{n},\check{X}_{n})$ on $G$ with law $\check{\mu}(g)=\mu(g^{-1})$ for all $g\in G$, and starting at $\2$. The \textit{reflected random walk} $\check{X}_{n}$ on $\Gamma$ is the random walk on $\Gamma$ with law $\check{\nu}(x)=\nu(x^{-1})$, for all $x\in\Gamma$, and starting at $e$.

\paragraph*{\begin{center}The half-space method\end{center}}\label{sec:Method for constructing the strip}
Assume that:
\begin{enumerate}
\item The \textbf{Basic assumptions $(3.1)$} hold for $X_{n}$ and $\check{X}_{n}$. Let $\nu_{\infty}$ and $\check{\nu}_{\infty}$ be the respective hitting distributions on $\Gamma$. 
\item For $\nu_{\infty}\times\check{\nu}_{\infty}$-almost every pair $(\mathfrak{u},\mathfrak{v})\in\partial \Gamma\times\partial\Gamma$, one has a strip $\mathfrak{s}(\mathfrak{u},\mathfrak{v})$ which satisfies the conditions from the Proposition \ref{StripCriterion}. That is, it is a subset of $\Gamma$, it is $\Gamma$-equivariant, and it has subexponential growth, i.e.,
\begin{equation}\label{SubexponentialGrowthBaseStrip}
\dfrac{1}{n}\log|\mathfrak{s}(\mathfrak{u},\mathfrak{v})\cap B(e,n)|\to 0,\mbox{ as }n\to\infty,
\end{equation}
where $B(e,n)=\{x\in\Gamma:\ d(e,x)\leq n\}$ is the ball with center $o$ and radius $n$ in $\Gamma$. 
\item For every $x\in\mathfrak{s}(\mathfrak{u},\mathfrak{v})$, one can assign to the triple $(\mathfrak{u},\mathfrak{v},x)$ a partiton of $\Gamma$ into \textit{half-spaces} $\Gamma_{\pm}$ such that $\Gamma_{+}$ (respectively, $\Gamma_{-}$) contains a neighbourhood of $\mathfrak{u}$ (respectively, $\mathfrak{v}$), and the assignments $(\mathfrak{u},\mathfrak{v},x)\mapsto\Gamma_{\pm}$ are $\Gamma$-equivariant.
\end{enumerate}
In the last item, one can partition $\Gamma$ in more that two subsets and the method can still be applied. However, the important subsets are that ones containing a neighbourhood of $\mathfrak{u}$ (respectively, $\mathfrak{v}$), since only there may be infinitely many lamps switched on (because $\mathfrak{u}$ and $\mathfrak{v}$ are the respective boundary points toward the random walks $X_{n}$ and $\check{X}_{n}$ converge). What we want is to build a finitely supported configuration associated to pairs $(\phi_{+},\phi_{-})$ of limit configurations (of the lamplighter random walk and of the reflected random walk) accumulating at $\mathfrak{u}$ and $\mathfrak{v}$, respectively. In order to do this we restrict $\phi_{+}$ and $\phi_{-}$ on $\Gamma_{-}$ and $\Gamma_{+}$, respectively, and then we ``glue together'' the restrictions. Since the new configuration depends on the partition of $\Gamma$, we cannot choose the same partition for  all x, because we will have a constant configuration which is not equivariant. Therefore, the partition of $\Gamma$ should depend on $x$. 

We state here the main result of this paper.
\begin{theo}\label{PoissonTheorem}
Let $Z_{n}=(\eta_{n},X_{n})$ be a random walk with law $\mu$ on $G=\Z_{2}\wr\Gamma$, such that $supp(\mu)$ generates $G$. Suppose that $\mu$ has finite first moment and $\Omega$ is defined as in \eqref{omega}. If the above assumptions are satisfied, then the measure space $(\Omega,\mu_{\infty})$ is the Poisson boundary of $Z_{n}$, where $\mu_{\infty}$ is the limit distribution on $\Omega$ of $Z_{n}$ starting at $id=\2$.
\end{theo}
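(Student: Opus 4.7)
The plan is to apply the Strip Criterion (Proposition~\ref{StripCriterion}) with $(B_+,\lambda_+)=(\Omega,\mu_\infty)$ and $(B_-,\lambda_-)=(\check\Omega,\check\mu_\infty)$, the latter being the analogous limit space for the reflected walk $\check Z_n$. Theorem~\ref{ConvergenceTheorem} applied both to $Z_n$ and to $\check Z_n$ certifies that these are genuine $\mu$- and $\check\mu$-boundaries. What remains is to construct, for $\check\mu_\infty\times\mu_\infty$-almost every pair $((\zeta^-,\mathfrak{v}),(\zeta^+,\mathfrak{u}))\in\check\Omega\times\Omega$, a nonempty measurable $G$-equivariant strip $S((\zeta^-,\mathfrak{v}),(\zeta^+,\mathfrak{u}))\subset G$ whose intersections with $d_G$-balls grow subexponentially.

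The strip is built by the half-space recipe. Since the projections $\Omega\to\partial\Gamma$ and $\check\Omega\to\partial\Gamma$ push $\mu_\infty,\check\mu_\infty$ forward to $\nu_\infty,\check\nu_\infty$, the pair $(\mathfrak{u},\mathfrak{v})$ almost surely lies where the base strip $\mathfrak{s}(\mathfrak{u},\mathfrak{v})\subset\Gamma$ and the partitions $\Gamma=\Gamma_+(x)\sqcup\Gamma_-(x)$ are defined. For each $x\in\mathfrak{s}(\mathfrak{u},\mathfrak{v})$ I set
\begin{equation*}
\phi_x(y)=\begin{cases}\zeta^+(y),&y\in\Gamma_-(x),\\ \zeta^-(y),&y\in\Gamma_+(x),\end{cases}
\end{equation*}
and declare
\begin{equation*}
S((\zeta^-,\mathfrak{v}),(\zeta^+,\mathfrak{u}))=\{(\phi_x,x):x\in\mathfrak{s}(\mathfrak{u},\mathfrak{v})\}.
\end{equation*}
The configuration $\phi_x$ is finitely supported because $\zeta^+$ accumulates only at $\mathfrak{u}$ while $\Gamma_-(x)$ avoids a neighbourhood of $\mathfrak{u}$ (so $\zeta^+|_{\Gamma_-(x)}$ is finite), and symmetrically for $\zeta^-|_{\Gamma_+(x)}$; hence $(\phi_x,x)\in G$, and nonemptiness of $S$ reduces to that of $\mathfrak{s}(\mathfrak{u},\mathfrak{v})$.

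To verify equivariance, fix $g=(\eta,y)\in G$. Using the $\Gamma$-equivariances $\mathfrak{s}(y\mathfrak{u},y\mathfrak{v})=y\mathfrak{s}(\mathfrak{u},\mathfrak{v})$ and $\Gamma_\pm(y\mathfrak{u},y\mathfrak{v},yx)=y\Gamma_\pm(\mathfrak{u},\mathfrak{v},x)$, a pointwise check on each half-space shows that the configuration obtained by gluing $\eta\oplus T_y\zeta^+$ and $\eta\oplus T_y\zeta^-$ along the translated half-spaces equals $\eta\oplus T_y\phi_x$; hence $g\cdot S((\zeta^-,\mathfrak{v}),(\zeta^+,\mathfrak{u}))=S(g\cdot(\zeta^-,\mathfrak{v}),g\cdot(\zeta^+,\mathfrak{u}))$. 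Measurability of the assignment is inherited from that of the data $\mathfrak{s}$ and $\Gamma_\pm$. For the growth estimate, the map $x\mapsto(\phi_x,x)$ is injective, and from \eqref{GraphGmetric} any $(\phi_x,x)\in B(id,n)$ satisfies $d(e,x)\leq n$, so
\begin{equation*}
\frac{1}{n}\log|S\cap B(id,n)|\;\leq\;\frac{1}{n}\log|\mathfrak{s}(\mathfrak{u},\mathfrak{v})\cap B(e,n)|\;\longrightarrow\;0
\end{equation*}
by hypothesis \eqref{SubexponentialGrowthBaseStrip}. Proposition~\ref{StripCriterion} then identifies $(\Omega,\mu_\infty)$ and $(\check\Omega,\check\mu_\infty)$ as the Poisson boundaries of $(G,\mu)$ and $(G,\check\mu)$.

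The step I expect to require most care is the equivariance verification: it hinges on the observation that the same $\eta$-correction appears in $g\cdot(\zeta^\pm,\cdot)=(\eta\oplus T_y\zeta^\pm,y\cdot)$ for both signs, so the pointwise gluing along the translated partition picks it up uniformly and reproduces precisely the lamplighter action on $(\phi_x,x)$. Everything else — non-emptiness, finite support of $\phi_x$, and the subexponential growth estimate — is a direct transfer from the base-group data $\mathfrak{s}$ and $\Gamma_\pm$ to their lifts in $G$, with no loss in the growth rate because the fibre over each $x$ is a single point.
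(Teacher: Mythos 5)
Your proposal is correct and follows essentially the same route as the paper: identify $(\Omega,\mu_\infty)$ and $(\Omega,\check\mu_\infty)$ as $\mu$- and $\check\mu$-boundaries via Theorem~\ref{ConvergenceTheorem}, glue the two limit configurations along the $x$-dependent half-space partition to get a finitely supported $\Phi$, take $S=\{(\Phi,x):x\in\mathfrak{s}(\mathfrak{u},\mathfrak{v})\}$, and check equivariance and the one-configuration-per-$x$ growth bound before invoking Proposition~\ref{StripCriterion}. The only cosmetic difference is that the paper also allows a residual set $\Gamma\setminus(\Gamma_+\cup\Gamma_-)$ on which $\Phi$ is set to $0$, which your two-part partition subsumes.
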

\begin{proof}
In order to apply the Strip Criterion \ref{StripCriterion}, we need to find $\mu$- and $\check{\mu}$-boundaries for the lamplighter random walk $Z_{n}$ and the reflected lamplighter random walk $\check{Z}_{n}$, respectively. By Theorem \ref{ConvergenceTheorem} each of the random walks $Z_{n}$ and $\check{Z}_{n}$ starting at $id$ converges almost surely to an $\Omega$-valued random variable. If $\mu_{\infty}$  and $\check{\mu}_{\infty}$ are their respective limit distributions on $\Omega$, then the spaces $(\Omega,\mu_{\infty})$ and $(\Omega,\check{\mu}_{\infty})$ are $\mu$- and $\check{\mu}$- boundaries of the respective random walks. 
 
Let us take $b_{+}=(\phi_{+},\mathfrak{u})$, $b_{-}=(\phi_{-},\mathfrak{v})\in\Omega$, where $\phi_{+}$ and $\phi_{-}$ are the limit configurations of $Z_{n}$ and $\check{Z}_{n}$, respectively, and $\mathfrak{u},\mathfrak{v}\in\partial\Gamma$ are their only respective accumulation points. By the continuity of $\mu_{\infty}$  and $\check{\mu}_{\infty}$, the set $\{(b_{+},b_{-})\in\Omega\times\Omega:\mathfrak{u}=\mathfrak{v}\}$ has $(\mu_{\infty}\times\check{\mu}_{\infty})$-measure $0$, so that, in constructing the strip $S(b_{+},b_{-})$ we shall consider only the case $\mathfrak{u}\neq\mathfrak{v}$.

Using the third item in the above assumptions, let us consider a partition of $\Gamma$ into $\Gamma_{+}$, $\Gamma_{-}$, and eventually $\Gamma\setminus(\Gamma_{+}\cup\Gamma_{-})$, where $\Gamma_{+}$ (respectively, $\Gamma_{-}$) contains a neighbourhood of $\mathfrak{u}$ (respectively, $\mathfrak{v}$), and $\Gamma\setminus(\Gamma_{+}\cup\Gamma_{-})$ is the remaining subset (which may be empty). The set $\Gamma\setminus(\Gamma_{+}\cup\Gamma_{-})$ contains neither $\mathfrak{u}$ nor $\mathfrak{v}$. The restriction of $\phi_{+}$ on $\Gamma_{-}$ (respectively, of $\phi_{-}$ on $\Gamma_{+}$) is finitely supported since its only accumulation point is $\mathfrak{u}$ (respectively, $\mathfrak{v}$), which is not in a neighbourhood of $\Gamma_{-}$ (respectively, $\Gamma_{+}$). Now ``put together'' the restriction of $\phi_{+}$ on $\Gamma_{-}$ and of $\phi_{-}$ on $\Gamma_{-}$ in order to get the new configuration
\begin{equation}\label{StripConfiguration}
\Phi(b_{+},b_{-},x)=
\begin{cases}
\phi_{-}, & \mbox{on}\  \Gamma_{+}\\
\phi_{+}, & \mbox{on}\ \Gamma_{-}\\
0, & \mbox{on}\ \Gamma\setminus(\Gamma_{+}\cup\Gamma_{-})
\end{cases} 
\end{equation}
on $\Gamma$, which is, by construction, finitely supported. Now, the sought for the ``bigger'' strip $S(b_{+},b_{-})\subset G$ is the set
\begin{equation}\label{LamplighterStrip}
S(b_{+},b_{-})=\{\left(\Phi,x\right) :\  x\in\mathfrak{s}(\mathfrak{u},\mathfrak{v})\}
\end{equation}
of all pairs $(\Phi,x)$, where $\Phi=\Phi(b_{+},b_{-},x)$ is the configuration defined above and $x$ runs through the strip $\mathfrak{s}(\mathfrak{u},\mathfrak{v})$ in $\Gamma$. This is a subset of $G=\Z_{2}\wr\Gamma$. We prove that the map $(b_{+},b_{-})\mapsto S(b_{+},b_{-})$ is $G$-equivariant, i.e., for $g=(\eta,\gamma)\in G$:
\begin{equation*}
gS(b_{+},b_{-})=S(gb_{+},gb_{-}).
\end{equation*}
Next,
\begin{equation*}
gS(b_{+},b_{-})=(\eta,\gamma)\cdot\{\left(\Phi,x\right) :\  x\in\mathfrak{s}(\mathfrak{u},\mathfrak{v})\}=\left\lbrace (\eta\oplus T_{\gamma}\Phi,\gamma x),\ x\in\mathfrak{s}(\mathfrak{u},\mathfrak{v})\right\rbrace .
\end{equation*}
If $x\in\mathfrak{s}(\mathfrak{u},\mathfrak{v})$, then $\gamma x\in\mathfrak{s}(\gamma \mathfrak{u},\gamma\mathfrak{v})$, since $\mathfrak{s}(\gamma \mathfrak{u},\gamma\mathfrak{v})$ is $\Gamma$-equivariant. Also,
\begin{equation*}
\eta\oplus T_{\gamma}\Phi=
\begin{cases}
\eta\oplus T_{\gamma}\phi_{-}, & \mbox{on}\  \gamma\Gamma_{+} \\
\eta\oplus T_{\gamma}\phi_{+}, & \mbox{on}\  \gamma\Gamma_{-} \\
	0, & \mbox{on}\ \Gamma\setminus(\gamma\Gamma_{+}\cup \gamma\Gamma_{-}).
\end{cases}
\end{equation*}
This means that $\eta\oplus T_{\gamma}\Phi(b_{+},b_{-},x)=\Phi(gb_{+},gb_{-},\gamma x),\forall x\in\mathfrak{s}(\mathfrak{u},\mathfrak{v})$. On the other side,
\begin{equation*}
S(gb_{+},gb_{-})=S((\eta\oplus T_{\gamma}\phi_{+},\gamma\mathfrak{u}),(\eta\oplus T_{\gamma}\phi_{-},\gamma\mathfrak{v}))=\{(\Phi(gb_{+},gb_{-},\gamma x),\gamma x),\ x\in\mathfrak{s}(\mathfrak{u},\mathfrak{v})\},
\end{equation*}
that is, $gS(b_{+},b_{-})=S(gb_{+},gb_{-})$, and this  proves the $G$-equivariance of the strip $S(b_{+},b_{-})$.

Finally, let us prove that the strip $S(b_{+},b_{-})$ has subexponential growth. For this, let $(\eta,x)\in S(b_{+},b_{-})$ such that $d_{G}(\2,(\eta,x))\leq n$. From the definition of the metrics $d_{G}(\cdot,\cdot)$ and $d(\cdot,\cdot)$ on $G$ and $\Gamma$, respectively, it follows that $d(e,x)\leq n$. Therefore, if 
\begin{equation}\label{small_strip}
(\eta,x)\in S(b_{+},b_{-})\cap B(\2,n),\mbox{ then }x\in\mathfrak{s}(\mathfrak{u},\mathfrak{v})\cap B(e,n),
\end{equation}
where $B(\2,n)$ (respectively, $B(e,n)$) is the ball with center $id=\2$ (respectively, $e$) and of radius $n$ in $G$ (respectively, $\Gamma$). Since for every $x\in\mathfrak{s}(\mathfrak{u},\mathfrak{v})$ we associate only one configuration $\Phi$ in $S(b_{+},b_{-})$, equation \eqref{small_strip} implies that 
\begin{equation*}
|S(b_{+},b_{-})\cap B(\2,n)|\leq |\mathfrak{s}(\mathfrak{u},\mathfrak{v})\cap B(e,n)|.
\end{equation*}
Now, the assumption \eqref{SubexponentialGrowthBaseStrip} that the $\mathfrak{s}(\mathfrak{u},\mathfrak{v})$ has subexponential growth leads to 
\begin{equation*}
\dfrac{\log|S(b_{+},b_{-})\cap B(\2,n)|}{n}\to 0,\mbox{ as }n\to\infty, 
\end{equation*}
and this proves the subexponential growth of the strip.
Since for almost every pair of points $(b_{+},b_{-})\in\Omega\times\Omega$ we have assigned a strip $S(b_{+},b_{-})$ which satisfies the conditions from Proposition \ref{StripCriterion}, it follows that the measure space $(\Omega,\mu_{\infty})$ is the Poisson boundary of the lamplighter random walk $Z_{n}$.
\end{proof}
As an application of the half-space method, we consider several classes of base groups $\Gamma$: groups with infinitely many ends, hyperbolic groups and Euclidean lattices.

\subsection{Groups with infinitely many ends}\label{GraphsWithInfEnds}

The concept of \textit{ends} in the discrete settings goes back to Freudenthal \cite{Freudenthal1944}. The \textit{space of ends} $\partial\Gamma$ of a finitely generated group $\Gamma$ is defined as the space of ends of its Cayley graph with respect to a certain finite generating set. Recall that an end in a graph is an equivalence class of one-sided infinite paths, where two such paths are equivalent if there is a third one which meets each of the two infinitely often. We omit the description of the topology of $\widehat{\Gamma}=\Gamma\cup\partial\Gamma$, which can be found in Woess \cite{woess}. The space $\widehat{\Gamma}=\Gamma\cup\partial\Gamma$ is called the \textit{end compactification} of $\Gamma$.

A finitely generated group $\Gamma$, has one, two or infinitely many ends. If it has one end, then the end compactification is not suitable for a good description of the structure of $\Gamma$ at infinity. If it has two ends, then it is quasi-isometric with the two-way-infinite path and the Poisson boundary of any random walk with finite first moment on $\Gamma$ is trivial (see Woess \cite[Thm $25.4$]{woess}). Thus, we shall consider here the case when the underlying group $\Gamma$ for the lamplighter random walk $Z_{n}$ is a \textit{group with infinitely many ends}. The natural geometric boundary of $\Gamma$ is the space of ends $\partial\Gamma$.

We shall also use the powerful theory of cuts and structure trees developed by Dunwoody; see the book by Dicks and Dunwoody \cite{Dunwoody&Dicks}, or for another detailed description see Woess \cite{woess} and Thomassen and Woess \cite{Thomassen&Woess}. A detailed study of structure theory may be very fruitful for obtaining information on the behaviour of the random walks. We shall again omit the description of the structure tree $\mathcal{T}$ of the Cayley graph of a finitely generated group $\Gamma$ and of the structure map $\varphi$ between the Cayley graph of $\Gamma$ and its structure tree. The structure tree $\mathcal{T}$ is countable, but not necessarily locally finite, and $\Gamma$ acts on $\mathcal{T}$  by automorphisms.

The following result is a particular case of Theorem \ref{PoissonTheorem}. This is the first example where the half-space method can be applied in order to find the Poisson boundary of lamplighter random walks over groups with infinitely many ends.
\begin{theo}\label{PoissonInfEnds}
Let $\Gamma$ be a group with infinitely many ends and $Z_{n}=(\eta_{n},X_{n})$ be a random walk with law $\mu$ on $G=\Z_{2}\wr \Gamma$, such that $supp(\mu)$ generates $G$. Suppose that $\mu$ has finite first moment and $\Omega$ is defined as in \eqref{omega}. If $\mu_{\infty}$ is the limit distribution on $\Omega$ of the random walk $Z_{n}$ starting at $id=\2$, then $(\Omega,\mu_{\infty})$ is the Poisson boundary of the random walk $Z_{n}$. 
\end{theo}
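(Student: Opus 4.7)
The plan is to verify the three hypotheses of the half-space method, after which Theorem \ref{PoissonTheorem} delivers the conclusion. Take $\partial\Gamma$ to be the space of ends and $\widehat{\Gamma}$ the end compactification. I would first check the Basic Assumptions $(3.1)$ for both $X_{n}$ and $\check{X}_{n}$. Finite first moment of $\nu$ and $\check{\nu}$ on $\Gamma$ is inherited from that of $\mu$ on $G$ via the definition of $d_{G}$. Almost-sure convergence of a transient random walk with finite first moment on $\Gamma$ to a random end is classical (Woess \cite{woess}, Ch.~IV). The convergence property \eqref{Convergence} for the end compactification is immediate: if $(x_{n})$ accumulates at $\xi\in\partial\Gamma$ and $d(x_{n},y_{n})/d(x_{n},e)\to 0$, then for every finite $F\subset\Gamma$ the elements $x_{n}$ and $y_{n}$ eventually lie in the same component of $\Gamma\setminus F$, so $(y_{n})$ accumulates at $\xi$ as well.

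Second, I would construct the base strip $\mathfrak{s}(\mathfrak{u},\mathfrak{v})\subset\Gamma$ via the Dunwoody structure tree $\T$ and the $\Gamma$-equivariant structure map $\varphi\colon\Gamma\to V(\T)$. Since a finitely generated group with infinitely many ends is accessible, $\T$ can be chosen so that $\Gamma$ acts by automorphisms and the edges correspond to a $\Gamma$-invariant system of thin cuts of uniformly bounded size. Each end of $\Gamma$ corresponds to a unique end of $\T$, and for $\mathfrak{u}\neq\mathfrak{v}$ the pair determines a unique bi-infinite geodesic $\pi(\mathfrak{u},\mathfrak{v})\subset\T$. Set
\begin{equation*}
\mathfrak{s}(\mathfrak{u},\mathfrak{v})\;=\;\varphi^{-1}\bigl(\pi(\mathfrak{u},\mathfrak{v})\bigr).
\end{equation*}
The $\Gamma$-equivariance of $\mathfrak{s}$ is automatic. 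The subexponential growth of $\mathfrak{s}(\mathfrak{u},\mathfrak{v})\cap B(e,n)$ follows from two facts: $\varphi$ is Lipschitz, so only linearly many vertices of $\pi(\mathfrak{u},\mathfrak{v})$ are reachable from within $B(e,n)$; and, thanks to the bounded-cut property, each block $\varphi^{-1}(T)\cap B(e,n)$ grows polynomially in $n$.

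Third, I would build the half-space partition. For $x\in\mathfrak{s}(\mathfrak{u},\mathfrak{v})$, the vertex $\varphi(x)\in V(\T)$ lies on $\pi(\mathfrak{u},\mathfrak{v})$. Deleting from $\T$ the two edges of $\pi(\mathfrak{u},\mathfrak{v})$ incident to $\varphi(x)$ leaves one component $\T_{+}$ with $\mathfrak{u}$ at infinity, one component $\T_{-}$ with $\mathfrak{v}$ at infinity, and a residual component containing $\varphi(x)$. Pulling back by $\varphi$ yields the partition $\Gamma_{+}=\varphi^{-1}(\T_{+})$, $\Gamma_{-}=\varphi^{-1}(\T_{-})$ and $\Gamma\setminus(\Gamma_{+}\cup\Gamma_{-})$. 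By construction $\Gamma_{+}$ contains an end-neighbourhood of $\mathfrak{u}$ and $\Gamma_{-}$ one of $\mathfrak{v}$, and the assignment $(\mathfrak{u},\mathfrak{v},x)\mapsto(\Gamma_{+},\Gamma_{-})$ is $\Gamma$-equivariant since every ingredient is.

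The step I expect to be the main obstacle is the subexponential growth estimate: the tree geodesic $\pi(\mathfrak{u},\mathfrak{v})$ is combinatorially thin, but the fibres of $\varphi$ are typically infinite, and one must use the full strength of Dunwoody's accessibility theorem together with the uniform bound on the cut sizes to control $|\varphi^{-1}(T)\cap B(e,n)|$. Once this polynomial-in-$n$ bound is in hand, summing over the at most linearly many tree vertices on $\pi(\mathfrak{u},\mathfrak{v})$ reachable within $B(e,n)$ gives the required estimate \eqref{SubexponentialGrowthBaseStrip}, and Theorem \ref{PoissonTheorem} then identifies $(\Omega,\mu_{\infty})$ as the Poisson boundary of $Z_{n}$.
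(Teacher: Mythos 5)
Your overall architecture (verify the Basic Assumptions, build a base strip from the Dunwoody structure tree, build the half-space partition from the cuts, then invoke Theorem \ref{PoissonTheorem}) is the same as the paper's, and your treatment of the convergence property \eqref{Convergence} and of the half-space partition is sound. But there is a genuine gap in your choice of strip. You set $\mathfrak{s}(\mathfrak{u},\mathfrak{v})=\varphi^{-1}\bigl(\pi(\mathfrak{u},\mathfrak{v})\bigr)$, the \emph{full} preimage of the tree geodesic, and your subexponential growth argument rests on the claim that each block $\varphi^{-1}(v)\cap B(e,n)$ grows polynomially in $n$. That claim is false in general: the fibres of the structure map over tree vertices are essentially cosets of the vertex groups of the splitting, and accessibility only guarantees that these vertex groups are finite or one-ended --- it says nothing about their growth. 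For example, if $\Gamma=H\ast\Z$ with $H$ a one-ended hyperbolic group (say a surface group), then $\Gamma$ has infinitely many ends, but a single fibre $\varphi^{-1}(v)$ is a coset of $H$, which is undistorted in $\Gamma$, so $|\varphi^{-1}(v)\cap B(e,n)|$ grows exponentially. Your strip then fails \eqref{SubexponentialGrowthBaseStrip}, and no amount of Lipschitz control on $\varphi$ or bounds on cut sizes repairs this, because the obstruction lives entirely inside one fibre.

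The fix --- and what the paper actually does --- is to take a much thinner strip: fix one $D$-cut $F$ with finite vertex set $F^{0}$ and put
\begin{equation*}
\mathfrak{s}(\mathfrak{u},\mathfrak{v})=\bigcup\{\gamma F^{0}:\ \gamma\in\Gamma,\ \widehat{\U}(\mathfrak{u},\gamma F)\neq \widehat{\U}(\mathfrak{v},\gamma F)\},
\end{equation*}
i.e.\ the union of the (uniformly finite) separating cuts themselves, not of the vertex blocks they bound. Equivariance is immediate, and the nesting property of $D$-cuts (any properly nested chain $A_{0}\supset\cdots\supset A_{k}$ of length $k$ forces $d(A_{k},\Gamma\setminus A_{0})\geq 2$) shows that only $O(n)$ translates $\gamma F$ meet $B(e,n)$, whence $|\mathfrak{s}(\mathfrak{u},\mathfrak{v})\cap B(e,n)|\leq cn$. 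The half-space partition is then obtained, for each $x$ lying in a unique cut $\gamma F$, by letting $\Gamma_{+}$ be the component of $\Gamma\setminus\gamma F$ containing $\mathfrak{u}$ and $\Gamma_{-}$ its complement. You should also note (as the paper does) that the tree geodesic need not be two-way infinite for arbitrary pairs of ends; the growth estimate only needs to be checked on the set of pairs charged by $\nu_{\infty}\times\check{\nu}_{\infty}$, where $\mathfrak{u}\neq\mathfrak{v}$ almost surely by continuity of the harmonic measures.
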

\begin{proof}
In order to apply the Theorem \ref{PoissonTheorem}, we show that the conditions required in the half-space method are satisfied for the base group $\Gamma$ and random walks $X_{n}$ on $\Gamma$. First of all, one can check that the space of ends $\partial\Gamma$ satisfies the convergence property (\ref{Convergence}). When $\Gamma$ is a group with infinitely many ends and the law $\nu$ of the random walk $X_{n}$ has finite first moment, by Woess \cite{WoessAmenable1989}, $X_{n}$ converges in the end topology to a random end from $\partial\Gamma$. The same is true for the random walk $\check{X}_{n}$ with law $\check{\nu}$ on $\Gamma$. Moreover, the limit distributions are continuous measures on $\partial\Gamma$. Let $\nu_{\infty}$ and $\check{\nu}_{\infty}$ be the respective limit distributions on $\partial\Gamma$. The \textbf{Basic assumptions $(3.1)$} hold for $X_{n}$ and $\check{X}_{n}$, and the first item in the half-space method is fulfilled. 

Next, one of the main points of the method is to assign a strip $\mathfrak{s}(\mathfrak{u},\mathfrak{v})\subset\Gamma$ to almost every pair of ends $(\mathfrak{u},\mathfrak{v})\in\partial\Gamma\times\partial\Gamma$, and to prove that it satisfies the conditions from Proposition \ref{StripCriterion}. By the continuity of $\nu_{\infty}$  and $\check{\nu}_{\infty}$, the set $\{(\mathfrak{u},\mathfrak{v})\in\partial\Gamma\times\partial\Gamma:\mathfrak{u}=\mathfrak{v}\}$ has $(\nu_{\infty}\times\check{\nu}_{\infty})$-measure $0$, so that, in constructing the strip $\mathfrak{s}(\mathfrak{u},\mathfrak{v})$ we shall consider only the case $\mathfrak{u}\neq\mathfrak{v}$. For this, let $F$ be a $D$-cut, i.e. a finite subset of the Cayley graph of $\Gamma$, whose deletion disconnects $\Gamma$ into precisely two connected components. This cut  is used in defining the structure tree of the graph. For details, see Dicks and Dunwoody \cite{Dunwoody&Dicks}. Denote by $F^{0}$ the set of all end vertices (in the Cayley graph of $\Gamma$) of the edges of $F$. For every pair of ends $(\mathfrak{u},\mathfrak{v})\in\partial \Gamma\times\partial\Gamma$, let us define the strip 
\begin{equation*}
\mathfrak{s}(\mathfrak{u},\mathfrak{v})=\bigcup\{\gamma F^{0}:\ \gamma\in\Gamma:\ \widehat{\U}(\mathfrak{u},\gamma F)\neq \widehat{\U}(\mathfrak{v},\gamma F)\}.
\end{equation*}
The set $\U(\mathfrak{u},F)$ is the connected component which represents the end $\mathfrak{u}$ when we remove the finite set $F$ from $\Gamma$, and $\widehat{\U}(\mathfrak{u},F)$ is its completion (which contains $\mathfrak{u}$) in $\widehat{\Gamma}$. It is clear that $\mathfrak{s}(\mathfrak{u},\mathfrak{v})$ is a subset of $\Gamma$, and moreover $\gamma \mathfrak{s}(\mathfrak{u},\mathfrak{v})=\mathfrak{s}(\gamma\mathfrak{u},\gamma\mathfrak{v}) $, for every $\gamma\in\Gamma$. The strip $\mathfrak{s}(\mathfrak{u},\mathfrak{v})$ is the union of all $\gamma F^{0}$ such that the connected components $\widehat{\U}(\mathfrak{u},\gamma F)$ and $\widehat{\U}(\mathfrak{v},\gamma F)$, which contain the ends $\mathfrak{u}$ and $\mathfrak{v}$, respectively, when we remove the set $\gamma F$ from $\Gamma$, are not the same. In other words, $\mathfrak{s}(\mathfrak{u},\mathfrak{v})$ is the union of all $\gamma F^{0}$ such that the sides (one connected component of $\Gamma\setminus\gamma F$ and its complement in $\Gamma$) of $\gamma F$, seen as edges of the structure tree $\mathcal{T}$, lie on the geodesic between $\varphi\mathfrak{u}$ and $\varphi\mathfrak{v}$ ($\varphi$ is the structure map between $\partial\Gamma$ and its structure tree $\mathcal{T}$). This geodesic can be empty (when $\varphi(\mathfrak{u})=\varphi(\mathfrak{v})$), finite (when $\varphi(\mathfrak{u})$ and $\varphi(\mathfrak{v})$ are vertices in the structure tree $\mathcal{T}$), one way infinite or two way infinite. The latter holds when $\mathfrak{u},\mathfrak{v}$ are distinct thin ends (i.e. ends with finite diameter), and we have to check the subexponential growth of the strip only in this case. Using the properties of the structure tree of the Cayley graph of $\Gamma$, there is an integer $k>0$, such that the following holds: if $A_{0},A_{1},\ldots ,A_{k}$ are oriented edges in the structure tree $\mathcal{T}$ and connected components in $\Gamma$ such that $A_{0} \supset A_{1} \supset \cdots \supset A_{k}$ properly, then $d(A_{k},\Gamma\setminus A_{0})\geq 2$. Finiteness of $F^{0}$ implies that there is a constant $c>0$ such that 
\begin{equation*} \label{subexp}
|\mathfrak{s}(\mathfrak{u},\mathfrak{v})\cap B(e,n) |\leq cn,
\end{equation*}
for all $n$, and for all distinct thin ends $\mathfrak{u},\mathfrak{v}$. This proves the subexponential growth of $\mathfrak{s}(\mathfrak{u},\mathfrak{v})$.

Next, let us go to the partition of $\Gamma$ into half-spaces. By the definition of $\mathfrak{s}(\mathfrak{u},\mathfrak{v})$, every $x\in\mathfrak{s}(\mathfrak{u},\mathfrak{v})$ is contained in some cut $\gamma F$, for some $\gamma\in\Gamma$. Since a $D$-cut $F$ in a graph has the property that the sets $\gamma F$ do not intersect, for every $\gamma\in\Gamma$, it follows that every $x\in\mathfrak{s}(\mathfrak{u},\mathfrak{v})$ is contained in exactly one cut $\gamma F$. We partition $\Gamma$ in this way: for every $x\in\mathfrak{s}(\mathfrak{u},\mathfrak{v})$, we look at the $D$-cut $\gamma F$ containing $x$, and remove it from $\Gamma$. Then the set $(\Gamma\setminus\gamma F)$ contains precisely two connected components. This follows from the definition of a $D$-cut, and from the finiteness of the removed set $F$. Moreover, the connected components containing $\mathfrak{u}$ and $\mathfrak{v}$ are different, by the definiton of the strip. Let $\Gamma_{+}$ be the connected component of $(\Gamma\setminus\gamma F)$, which contains $\mathfrak{u}$ and $\Gamma_{-}$ be its complement in $\Gamma$, which contains $\mathfrak{v}$. One can see here that the partition of $\Gamma$ into the half-spaces $\Gamma_{+}$ and $\Gamma_{-}$ depends on the cut $\gamma F$ containing $x$, that is, depends on $x$. The sets $\Gamma_{+}$ and $\Gamma_{-}$ are $\Gamma$-equivariant. From the above, it follows that all the assumptions needed in the half-space method hold in the case of a group with infinitely many ends. Now, we apply Theorem \ref{PoissonTheorem}.

By Theorem \ref{ConvergenceTheorem} each of the random walks $Z_{n}$ and $\check{Z}_{n}$ starting at $id$ converges almost surely to an $\Omega$-valued random variable. If $\mu_{\infty}$  and $\check{\mu}_{\infty}$ are their respective limit distributions on $\Omega$, then the spaces $(\Omega,\mu_{\infty})$ and $(\Omega,\check{\mu}_{\infty})$ are $\mu$- and $\check{\mu}$- boundaries of the respective random walks. 

Take $b_{+}=(\phi_{+},\mathfrak{u})$, $b_{-}=(\phi_{-},\mathfrak{v})\in\Omega$, where $\phi_{+}$ and $\phi_{-}$ are the limit configurations of $Z_{n}$ and $\check{Z}_{n}$, respectively, and $\mathfrak{u},\mathfrak{v}\in\partial\Gamma$ are their only respective accumulation points. Define the configuration $\Phi(b_{+},b_{-},x)$ like in \eqref{StripConfiguration}, where $\Gamma\setminus(\Gamma_{+}\cup\Gamma_{-})$ is the empty set, and the strip $S(b_{+},b_{-})$ exactly like in \eqref{LamplighterStrip}. From Theorem \ref{PoissonTheorem}, $S(b_{+},b_{-})$ satisfies the conditions from Proposition \ref{StripCriterion}, and it follows that the space $(\Omega,\mu_{\infty})$ is the Poisson boundary of the lamplighter random walk $Z_{n}$ over $G=\Z_{2}\wr \Gamma$.
\end{proof}

\subsection{Hyperbolic groups}

Consider the group $\Gamma$ with the word metric $d(\cdot,\cdot)$ on it. The group $\Gamma$ is called (word) \textit{hyperbolic} if its Cayley graph corresponding to a finite generating set  $S$ is hyperbolic. A graph is called \textit{hyperbolic} (in the sense of Gromov) if there is a $\delta \geq 0$ such that every geodesic triangle in the graph is $\delta$-thin. We shall not lay out the basic features of hyperbolic graphs and their hyperbolic boundary and compactification, which we denote again by $\partial_{h}\Gamma$ and $\widehat{\Gamma}$. For details, the reader is invited to consult the texts by Gromov \cite{Gromov1987}, Ghys and de la Harpe \cite{GhysHarpe1990}, Coornaert, Delzant and Papadopoulus \cite{CoornaertDelzantPapadopoulos1990}, or, for a presentation in the context of random walks on graphs, Woess \cite[Section 22]{woess}.

If $\Gamma$ is a hyperbolic group, then one can understand how its hyperbolic compactification $\partial_{h}\Gamma$ is related to the end compactification $\partial\Gamma$: the former is finer, that is, the identity on $\Gamma$ extends to a continuos surjection from the hyperbolic to the end compactification  which maps $\partial_{h}\Gamma$ onto $\partial\Gamma$. For every two distinct points $\mathfrak{u},\mathfrak{v}$ on the hyperbolic boundary $\partial_{h}\Gamma$, there is a geodesic $\overline{\mathfrak{u}\mathfrak{v}}$ which may not be unique. The boundary of a finitely generated hyperbolic group $\Gamma$ is either infinite or has cardinality $2$. In the latter case, it is a group with two ends that is quasi-isometric with the two-way infinite path, and the Poisson boundary of any random walk with finite first moment is trivial. Thus, we assume that $\partial_{h}\Gamma$ is infinite, and consider lamplighter random walks $Z_{n}$ on $G=\Z_{2}\wr\Gamma$, when $\Gamma$ is a hyperbolic group. The natural geometric boundary of $\Gamma$ is the hyperbolic boundary $\partial_{h}\Gamma$. For the Poisson boundary of the lamplighter random walk  $Z_{n}$ on $G=\Z_{2}\wr\Gamma$ we have to distinguish two cases: \textbf{(a)} infinite hyperbolic boundary and infinitely many ends and \textbf{(b)} infinite hyperbolic boundary and only one end.

\paragraph{(a) Infinite hyperbolic boundary and infinitely many ends.} From the fact that the identity on $\Gamma$ extends to a continuous surjection from the hyperbolic to the end compactification, which maps $\partial_{h}\Gamma$ onto $\partial\Gamma$, it follows that, this is exactly the case treated in Section \ref{GraphsWithInfEnds}, i.e., of a group with infinitely many ends, where the ends are the connected components of the hyperbolic boundary. The Poisson boundary of the lamplighter random walk $Z_{n}$ is given by Theorem \ref{PoissonInfEnds}, with the hyperbolic boundary $\partial_{h}\Gamma$ instead of the space of ends $\partial\Gamma$.

\paragraph{(b) Infinite hyperbolic boundary and only one end.} What we want is to determine the Poisson boundary of lamplighter random walks $Z_{n}$ on $G=\Z_{2}\wr\Gamma$, when $\Gamma$ is a finitely generated hyperbolic group with infinite boundary and only one end. In order to use the half-space method defined before, we shall need some additional definitions.

Consider the hyperbolic boundary $\partial_{h}\Gamma$ as being described by equivalence of geodesic rays. For $y\in\Gamma$ and $\mathfrak{u}\in\partial_{h}\Gamma$ let $\pi=[y=y_{0},y_{1},\ldots,\mathfrak{u}]$ be a geodesic ray joining $y$ with $\mathfrak{u}$. For every $x\in\Gamma$ let $\beta_{\mathfrak{u}}(x,\pi)=\limsup_{i\to\infty}(d(x,y_{i})-i)$, and define the \textit{Busemann function} of the point $\mathfrak{u}\in\partial_{h}\Gamma$, $\beta_{\mathfrak{u}}:\Gamma\times\Gamma\rightarrow \mathbb{R}$ as follows:
\begin{equation*}
\beta_{\mathfrak{u}}(x,y)=\sup\{\beta_{\mathfrak{u}}(x,\pi^{'}):\pi^{'} \mbox{ is a geodesic ray from }y\mbox{ to } \mathfrak{u}\}.
\end{equation*}
The \textit{horosphere} with the centre $\mathfrak{u}$ passing through $x\in\Gamma$, denoted $H_{x}(\mathfrak{u})$ is the set
\begin{equation*}
H_{x}(\mathfrak{u})=\{ y\in\Gamma:\ \beta_{\mathfrak{u}}(x,y)=0\}.
\end{equation*}
The following result is another special case of Theorem \ref{PoissonTheorem}.
\begin{theo}\label{PoissonHyperbolicGraphs}
Let $\Gamma$ be a finitely generated hyperbolic group with infinite hyperbolic boundary and only one end, and $Z_{n}=(\eta_{n},X_{n})$ be a random walk with law $\mu$ on $G=\Z_{2}\wr\Gamma$, such that $supp(\mu)$ generates $G$. Suppose that $\mu$ has finite first moment and $\Omega$ is defined as in \eqref{omega}, with the hyperbolic boundary $\partial_{h}\Gamma$ instead of $\partial\Gamma$. If $\mu_{\infty}$ is the limit distribution on $\Omega$ of the random walk $Z_{n}$ starting at $id=\2$, then $(\Omega,\mu_{\infty})$ is the Poisson boundary of the random walk.
\end{theo}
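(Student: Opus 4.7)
The plan is to apply Theorem \ref{PoissonTheorem} via the half-space method, so I need to verify its three hypotheses for the base group $\Gamma$ with its hyperbolic boundary $\partial_h\Gamma$, and then the Theorem takes care of the rest. Throughout, I work with almost every pair $(\mathfrak{u},\mathfrak{v})\in\partial_h\Gamma\times\partial_h\Gamma$ of distinct boundary points; this is enough because the hitting measures $\nu_\infty$ and $\check\nu_\infty$ are continuous on $\partial_h\Gamma$ (a standard fact for hyperbolic groups with finite first moment) and so the diagonal is null.

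First I would verify the \textbf{Basic assumptions $(3.1)$}. Item (a) is the hypothesis that $\mu$ has finite first moment, which projects to finite first moment for $\nu$ and $\check\nu$. Item (b) is the classical almost sure convergence $X_n\to X_\infty\in\partial_h\Gamma$ of random walks of finite first moment on hyperbolic groups, due to Kaimanovich (and similarly for $\check X_n$). For item (c), the convergence property \eqref{Convergence} in the hyperbolic topology follows from the fact that, writing Gromov products at $e$, $\delta$-hyperbolicity gives $(x_n\mid y_n)_e \geq \min\{(x_n\mid y_n)_{\text{geod}}, \cdots\}$ and $(x_n\mid y_n)_e\geq d(e,x_n)-d(x_n,y_n)$; so if $d(x_n,y_n)/d(x_n,e)\to 0$ while $d(x_n,e)\to\infty$, then $(x_n\mid y_n)_e\to\infty$ and $y_n$ accumulates at the same point $\xi$ as $x_n$.

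Next I construct the strip in $\Gamma$. For $\mathfrak{u}\neq\mathfrak{v}$ in $\partial_h\Gamma$, let
\begin{equation*}
\mathfrak{s}(\mathfrak{u},\mathfrak{v}) = \{x\in\Gamma : x\text{ lies on some geodesic from }\mathfrak{u}\text{ to }\mathfrak{v}\}.
\end{equation*}
This is $\Gamma$-equivariant by construction. Because any two bi-infinite geodesics between $\mathfrak{u}$ and $\mathfrak{v}$ stay within bounded Hausdorff distance (a consequence of $\delta$-hyperbolicity), $\mathfrak{s}(\mathfrak{u},\mathfrak{v})$ is contained in a uniform tubular neighbourhood of a single such geodesic, and the group being finitely generated and the ball $B(e,n)$ having exponential volume, the intersection $\mathfrak{s}(\mathfrak{u},\mathfrak{v})\cap B(e,n)$ has at most linearly many points. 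Hence
\begin{equation*}
\tfrac{1}{n}\log|\mathfrak{s}(\mathfrak{u},\mathfrak{v})\cap B(e,n)| \to 0,
\end{equation*}
so \eqref{SubexponentialGrowthBaseStrip} holds.

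Now the half-space partition. For each $x\in\mathfrak{s}(\mathfrak{u},\mathfrak{v})$ I use the Busemann functions to set
\begin{equation*}
\Gamma_+ = \{y\in\Gamma : \beta_{\mathfrak{u}}(x,y)\leq -M\},\qquad \Gamma_- = \{y\in\Gamma : \beta_{\mathfrak{v}}(x,y)\leq -M\},
\end{equation*}
for a sufficiently large constant $M=M(\delta)$ depending only on the hyperbolicity constant, with $\Gamma\setminus(\Gamma_+\cup\Gamma_-)$ as the remaining piece. The horoball $\Gamma_+$ based at $\mathfrak{u}$ contains a neighbourhood of $\mathfrak{u}$ in $\widehat\Gamma$ (any sequence tending to $\mathfrak{u}$ eventually enters a given horoball), similarly $\Gamma_-$ for $\mathfrak{v}$, and choosing $M$ of size $\asymp\delta$ ensures $\Gamma_+\cap\Gamma_-=\emptyset$ since a point in both would force the two Busemann cocycles to sum to $\leq -2M$ while hyperbolicity bounds $\beta_{\mathfrak{u}}(x,y)+\beta_{\mathfrak{v}}(x,y)\geq -C\delta$ for $x\in\mathfrak{s}(\mathfrak{u},\mathfrak{v})$. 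The assignment $(\mathfrak{u},\mathfrak{v},x)\mapsto\Gamma_\pm$ is $\Gamma$-equivariant because Busemann functions transform equivariantly, i.e. $\beta_{\gamma\mathfrak{u}}(\gamma x,\gamma y)=\beta_{\mathfrak{u}}(x,y)$.

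With all three hypotheses of the half-space method verified, Theorem \ref{PoissonTheorem} immediately yields that $(\Omega,\mu_\infty)$ is the Poisson boundary of $Z_n$. The main obstacle I anticipate is the careful verification that $\Gamma_\pm$ really are disjoint and contain $\widehat\Gamma$-neighbourhoods of $\mathfrak{u},\mathfrak{v}$: this requires controlling the almost-additivity defect $\beta_{\mathfrak{u}}+\beta_{\mathfrak{v}}\geq -O(\delta)$ along the strip, which is precisely where hyperbolicity enters and where the argument would fail for a non-hyperbolic one-ended group.
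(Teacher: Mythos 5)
Your proposal follows the same route as the paper: verify the three hypotheses of the half-space method for $(\Gamma,\partial_h\Gamma)$ and invoke Theorem \ref{PoissonTheorem}, with the strip $\mathfrak{s}(\mathfrak{u},\mathfrak{v})$ taken to be the union of all bi-infinite geodesics from $\mathfrak{u}$ to $\mathfrak{v}$ and the linear bound $|\mathfrak{s}(\mathfrak{u},\mathfrak{v})\cap B(e,n)|\leq cn$ obtained from the uniform Hausdorff closeness of such geodesics. Your verification of the convergence property \eqref{Convergence} via the Gromov product inequality $(x_n\mid y_n)_e\geq d(e,x_n)-d(x_n,y_n)$ is a correct filling-in of a step the paper only asserts.

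The one genuine divergence is the half-space partition. The paper sets $\Gamma_+=H_x(\mathfrak{u})$ and $\Gamma_-=H_x(\mathfrak{v})\setminus H_x(\mathfrak{u})$, where $H_x(\mathfrak{u})$ is the \emph{horosphere} $\{y:\beta_{\mathfrak{u}}(x,y)=0\}$, securing disjointness by brute-force set subtraction; you instead take genuine \emph{horoballs} (sublevel sets with a margin $M\asymp\delta$) and prove disjointness from the almost-additivity $\beta_{\mathfrak{u}}(x,\cdot)+\beta_{\mathfrak{v}}(x,\cdot)\geq -C\delta$ along the strip. Your variant is in fact the more defensible one: what the method requires is that $\Gamma\setminus\Gamma_+$ not accumulate at $\mathfrak{u}$ (so that $\phi_+$ restricted to the complement of $\Gamma_+$ is finitely supported), and a level set of the Busemann function does not obviously have this property, whereas a horoball does; the paper's wording is best read as an abuse of terminology for the corresponding horoball. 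One caveat: with the paper's convention $\beta_{\mathfrak{u}}(x,y)=\sup_{\pi'}\limsup_i(d(x,y_i)-i)$ along rays $\pi'$ from $y$ to $\mathfrak{u}$, the quantity $\beta_{\mathfrak{u}}(x,y)$ is large \emph{positive} when $y$ is deep toward $\mathfrak{u}$, so the horoball around $\mathfrak{u}$ is the superlevel set $\{\beta_{\mathfrak{u}}(x,\cdot)\geq M\}$ rather than your $\{\beta_{\mathfrak{u}}(x,\cdot)\leq -M\}$; your formula uses the opposite (more standard) sign convention, and should be flipped to match the paper's definition, but this does not affect the substance of the argument.
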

\begin{proof} The proof is as in the preceding example. First of all, we show that the conditions required in the half-space method are satisfied for $\Gamma$ and for random walks $X_{n}$ on $\Gamma$. One can check that the hyperbolic boundary $\partial_{h}\Gamma$ satisfies the convergence property (\ref{Convergence}). When $\Gamma$ is a hyperbolic group, and the law $\nu$ of the random walk $X_{n}$ on $\Gamma$ has finite first moment, Woess \cite{WoessFixedSets1993} (see also Woess \cite{woess}) proved that $X_{n}$ converges almost surely in the hyperbolic topology to a random element from $\partial_{h}\Gamma$, and the limit distribuition is a continuous measure. The same is true for the random walk $\check{X}_{n}$ with law $\check{\nu}$ on $\Gamma$. Let $\nu_{\infty}$ and $\check{\nu}_{\infty}$ be the respective limit distributions on $\partial_{h}\Gamma$. The \textbf{Basic assumptions $(3.1)$} hold for $X_{n}$ and $\check{X}_{n}$, and the first item in the half-space method is fulfilled. 

In order to prove that the second item in the half-space method holds, we assign to almost every pair of boundary points $(\mathfrak{u},\mathfrak{v})\in\partial_{h}\Gamma\times\partial_{h}\Gamma$ a strip $\mathfrak{s}(\mathfrak{u},\mathfrak{v})\subset\Gamma$ and we prove that it satisfies the conditions from Proposition \ref{StripCriterion}. By the continuity of $\nu_{\infty}$  and $\check{\nu}_{\infty}$ on $\partial_{h}\Gamma$, the set $\{(\mathfrak{u},\mathfrak{v})\in\partial_{h}\Gamma\times\partial_{h}\Gamma:\mathfrak{u}=\mathfrak{v}\}$ has $(\nu_{\infty}\times\check{\nu}_{\infty})$-measure $0$, so that, in constructing $\mathfrak{s}(\mathfrak{u},\mathfrak{v})$ we consider only the case $\mathfrak{u}\neq\mathfrak{v}$. Let 
\begin{equation*}
\mathfrak{s}(\mathfrak{u},\mathfrak{v})=\bigcup\{x\in\Gamma:x\mbox{ lies on a two way infinite geodesic between }\mathfrak{u}\mbox{ and }\mathfrak{v}\}.
\end{equation*} 
The strip $\mathfrak{s}(\mathfrak{u},\mathfrak{v})$ is the union of all points $x$ from all geodesics in $\Gamma$ joining $\mathfrak{u}$ and $\mathfrak{v}$. This is a subset of $\Gamma$, and $\gamma \mathfrak{s}(\mathfrak{u},\mathfrak{v})=\mathfrak{s}(\gamma\mathfrak{u},\gamma\mathfrak{v}) $, for every $\gamma\in\Gamma$. Since in a hyperbolic space any two geodesics with the same endpoints are within uniformly bounded distance one from another (see \cite{GhysHarpe1990} for details), and the geodesics have linear growth, it follows that 
there exists a constant $c>0$ such that 
\begin{equation*}
|\mathfrak{s}(\mathfrak{u},\mathfrak{v})\cap B(e,n)|\leq cn,
\end{equation*}
for all $n$ and distinct $\mathfrak{u},\mathfrak{v}\in\partial_{h}\Gamma$, and this proves the subexponential growth of $\mathfrak{s}(\mathfrak{u},\mathfrak{v})$.

Finally, let us partition $\Gamma$ into half-spaces. Actually, this is one of the examples where the partition is made in two half-spaces and another ``not-interesting'' set on which the configuration will be $0$. For every $x\in\mathfrak{s}(\mathfrak{u},\mathfrak{v})$, let $H_{x}(\mathfrak{u})$ (respectively, $H_{x}(\mathfrak{v})$) be the horosphere with center $\mathfrak{u}$ (respectively, $\mathfrak{v}$) and passing through $x$. Remark that the two horospheres may have non empty intersection. Consider the partition of $\Gamma$ into the subsets $\Gamma_{+}$, $\Gamma_{-}$, and $\Gamma\setminus(\Gamma_{+}\cup\Gamma_{-})$, where $\Gamma_{+}=H_{x}(\mathfrak{u})$ (that is, it contains a neighbourhood of $\mathfrak{u}$) and $\Gamma_{-}=H_{x}(\mathfrak{v})\setminus H_{x}(\mathfrak{u})$. This partition is $\Gamma$-equivariant. From the above, it follows that all the assumptions needed in the half-space method hold in the case of a finitely generated hyperbolic group $\Gamma$. Now, we apply Theorem \ref{PoissonTheorem}.

By Theorem \ref{ConvergenceTheorem} each of the random walks $Z_{n}$ and $\check{Z}_{n}$ starting at $id$ converges almost surely to an $\Omega$-valued random variable, where $\Omega$ is defined as in \eqref{omega}, with the hyperbolic boundary $\partial_{h}\Gamma$ instead of $\partial\Gamma$. If $\mu_{\infty}$  and $\check{\mu}_{\infty}$ are their respective limit distributions on $\Omega$, then the spaces $(\Omega,\mu_{\infty})$ and $(\Omega,\check{\mu}_{\infty})$ are $\mu$- and $\check{\mu}$- boundaries of the respective random walks. 

Take $b_{+}=(\phi_{+},\mathfrak{u})$, $b_{-}=(\phi_{-},\mathfrak{v})\in\Omega$, where $\phi_{+}$ and $\phi_{-}$ are the limit configurations of $Z_{n}$ and $\check{Z}_{n}$, respectively, and $\mathfrak{u},\mathfrak{v}\in\partial\Gamma$ are their only respective accumulation points. Define the configuration $\Phi(b_{+},b_{-},x)$ like in \eqref{StripConfiguration}, and the strip $S(b_{+},b_{-})$ exactly like in \eqref{LamplighterStrip}. From Theorem \ref{PoissonTheorem}, $S(b_{+},b_{-})$ satisfies the conditions from Proposition \ref{StripCriterion}, and it follows that the space $(\Omega,\mu_{\infty})$ (with the hyperbolic boundary $\partial_{h}\Gamma$ instead of $\partial\Gamma$ in the definition \eqref{omega} of $\Omega$) is the Poisson boundary of the lamplighter random walk $Z_{n}$ over $G=\Z_{2}\wr \Gamma$.
\end{proof}

\subsection{Euclidean lattices}

Let now $\Gamma= \Z^{d}$, $d\geq 3$, be the $d$-dimensional lattice, with the Euclidean metric $|\cdot|$ on it. For $\Z^{d}$, there are also natural boundaries and compactifications. A nice example of compactification is obtained by embedding $\Z^{d}$ into the $d$-dimensional unit disc via the map $x\mapsto x/(1+|x|)$, and taking the closure. In this compactification, the boundary $\partial\Z^{d}$ is the unit sphere $S_{d-1}$ in $\mathbb{R}^d$, and a sequence $x_{n}$ in $\Z^{d}$ converges to $\mathfrak{u}\in S_{d-1}$ if and only if $|x_{n}|\to\infty$ and $x_{n}/|x_{n}|\to\mathfrak{u}$, as $n\to\infty$. 

Let us check that the property \eqref{Convergence} holds for the boundary $S_{d-1}$. For this, let $x_{n}$ a sequence converging to $\mathfrak{u}\in S_{d-1}$, and $y_{n}$ another sequence in $\Z_{d}$, such that $x_{n}/|x_{n}|\to\mathfrak{u}$ and $|x_{n}-y_{n}|/|x_{n}|\to 0$ as $n\to\infty$. Since 
\begin{equation*}
\Big{|}\dfrac{y_{n}}{|x_{n}|}-\dfrac{x_{n}}{|x_{n}|}\Big{|}\leq\dfrac{|x_{n}-y_{n}|}{|x_{n}|}\to 0,
\end{equation*}
it follows that $y_{n}/|x_{n}|\to\mathfrak{u}$. Now, $y_{n}/|x_{n}|=(y_{n}/|y_{n}|)\cdot(|y_{n}|/|x_{n}|)$, and the sequence $|y_{n}|/|x_{n}|$ of real numbers converges to $1$, since we can bound it from above and from below by two sequences both converging to 1. Therefore $y_{n}/|y_{n}|\to\mathfrak{u}$, and the property \eqref{Convergence} holds. Next, if the law $\nu$ of the random walk $X_{n}$ on $\mathbb{Z}^{d}$ has non-zero first moment (drift)
\begin{equation*}
m=\sum_{x}x\nu(x)\in\mathbb{R}^{d}, 
\end{equation*}
then the law of large numbers implies that $X_{n}$ converges to the boundary $S_{d-1}$ in this compactification with deterministic limit $m/|m|$. In particular, the limit distribution $\nu_{\infty}$ is the Dirac mass at this point. Next, we state the result on the Poisson boundary of lamplighter random walks over $\Z^{d}$ in the case of non-zero drift, using the half-space method. Remark that, in this case the description of the Poisson boundary war earlier obtained by Kaimanovich \cite{KaimanovichPreprint}.
\begin{theo}\label{PoissonEuclideanLattices}
Let $\Gamma=\mathbb{Z}^{d}$, $d\geq 3$ be a Euclidean lattice and $Z_{n}=(\eta_{n},X_{n})$ be a random walk with law $\mu$ on $G=\Z_{2}\wr \mathbb{Z}^{d}$, such that $supp(\mu)$ generates $G$, and the projected random walk $X_{n}$ on $\mathbb{Z}^{d}$ has non-zero drift. Suppose that $\mu$ has finite first moment and $\Omega$ is defined as in \eqref{omega}, with the unit sphere $S_{d-1}$ instead of $\partial\Gamma$. If $\mu_{\infty}$ is the limit distribution on $\Omega$ of the random walk $Z_{n}$ starting at $id=\2$, then $(\Omega,\mu_{\infty})$ is the Poisson boundary of the random walk.
\end{theo}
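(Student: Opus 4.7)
The plan is to apply Theorem \ref{PoissonTheorem} by checking the three items of the half-space method for $\Gamma = \mathbb{Z}^d$ with the sphere compactification $\widehat{\Gamma} = \mathbb{Z}^d \cup S_{d-1}$. The convergence property \eqref{Convergence} for $S_{d-1}$ was verified just before the theorem statement. For the remainder of the Basic assumptions $(3.1)$, finite first moment of $\nu$ is inherited from that of $\mu$ on $G$, and the non-zero drift hypothesis together with Kolmogorov's law of large numbers gives $X_n/n \to m$ almost surely; hence $X_n/|X_n| \to m/|m|$, i.e.\ $X_n \to \mathfrak{u}_0 := m/|m|$ almost surely in $\widehat{\Gamma}$, so $\nu_\infty = \delta_{\mathfrak{u}_0}$. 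The reflected walk $\check X_n$ has drift $-m$, so $\check\nu_\infty = \delta_{\mathfrak{v}_0}$ with $\mathfrak{v}_0 := -m/|m|$.

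For the strip, the product measure $\nu_\infty \times \check\nu_\infty$ is the Dirac mass at the single pair $(\mathfrak{u}_0, \mathfrak{v}_0)$, so I only need to define $\mathfrak{s}(\mathfrak{u}_0, \mathfrak{v}_0)$. Here is the key structural observation: $\mathbb{Z}^d$ acts trivially on $S_{d-1}$, because a fixed translation does not alter the asymptotic direction of an unbounded sequence. The equivariance relation $\gamma\, \mathfrak{s}(\mathfrak{u},\mathfrak{v}) = \mathfrak{s}(\gamma\mathfrak{u}, \gamma\mathfrak{v})$ therefore forces $\mathfrak{s}(\mathfrak{u}_0, \mathfrak{v}_0)$ to be translation-invariant, and the only non-empty translation-invariant subset of $\mathbb{Z}^d$ is $\mathbb{Z}^d$ itself. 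I take $\mathfrak{s}(\mathfrak{u}_0, \mathfrak{v}_0) := \mathbb{Z}^d$. The subexponential growth condition \eqref{SubexponentialGrowthBaseStrip} then follows from $|B(e,n)| \leq c_d\, n^d$, since $n^{-1}\log(c_d n^d) \to 0$.

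For the half-space partition, for each $x \in \mathbb{Z}^d$ I use the affine hyperplane through $x$ orthogonal to the drift $m$:
\begin{equation*}
\Gamma_\pm(x) := \{y \in \mathbb{Z}^d : \pm\langle y - x,\, m\rangle > 0\},
\end{equation*}
with the lattice points on the hyperplane assigned to $\Gamma \setminus (\Gamma_+(x) \cup \Gamma_-(x))$. For any fixed $x$, once $|y|$ is large enough and $y/|y|$ is close enough to $\mathfrak{u}_0 = m/|m|$ one has $\langle y - x, m\rangle > 0$, so $\Gamma_+(x)$ contains a $\widehat{\Gamma}$-neighbourhood of $\mathfrak{u}_0$; symmetrically for $\Gamma_-(x)$ and $\mathfrak{v}_0$. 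The equivariance $\gamma \Gamma_\pm(x) = \Gamma_\pm(\gamma x)$ is immediate from a translation check. All three items of the half-space method now hold, and Theorem \ref{PoissonTheorem} identifies $(\Omega, \mu_\infty)$ as the Poisson boundary.

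The step I expect to be the main conceptual obstacle is the choice of strip in the base group. Unlike the hyperbolic or infinite-ends examples, where $\mathfrak{s}(\mathfrak{u},\mathfrak{v})$ is a thin ``geodesic-like'' object of linear growth, the triviality of the $\mathbb{Z}^d$-action on $S_{d-1}$ here forces the strip to be the full lattice; this is admissible only because polynomial growth is subexponential. The genuine $x$-dependence required by the Strip Criterion is then carried entirely by the half-space partition, whose separating hyperplane shifts with $x$ and thereby produces a truly $x$-varying glued configuration $\Phi(b_+, b_-, x)$.
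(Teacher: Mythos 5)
Your proposal is correct and follows essentially the same route as the paper: the strip in the base group is taken to be all of $\mathbb{Z}^{d}$ (admissible because polynomial growth is subexponential), and the half-spaces are the two sides of the hyperplane through $x$ orthogonal to the drift direction, with Theorem \ref{PoissonTheorem} then applied. Your added observations --- that the trivial action of $\mathbb{Z}^{d}$ on $S_{d-1}$ forces the strip to be the whole lattice, and the explicit assignment of hyperplane lattice points to the remainder set --- are correct refinements of the paper's argument but do not change the approach.
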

\begin{proof} Let us show that the conditions required in the half-space method are satisfied for $\Gamma=\Z^{d}$ and for random walks $X_{n}$ on $\Gamma$. The random walk $X_{n}$ (respectively $\check{X}_{n}$) converges to the boundary $S_{d-1}$ with deterministic limit $\mathfrak{u}=m/|m|$ (respectively, $\mathfrak{v}=-m/|m|$), in the case of non-zero mean $m$, and the convergence property \eqref{Convergence} holds. The \textbf{Basic assumptions $(3.1)$} hold for $X_{n}$ and $\check{X}_{n}$, and the first item in the half-space method is satisfied. The limit distributions $\nu_{\infty}$ and $\check{\nu}_{\infty}$ are the Dirac-masses at this limit points.

Now, to define a strip in $\Z^{d}$ is an easy task, because of the growth of $\Z^{d}$. For the two limit  points $\mathfrak{u}$ and $\mathfrak{v}$ of $X_{n}$ and $\check{X}_{n}$, respectively, define the strip $\mathfrak{s}(\mathfrak{u},\mathfrak{v})=\Z^{d}$. This strip does not depend on the limit points, it is $\Z^{d}$-equivariant, and it has polynomial growth of order $d$, that is, also subexponential growth. Next, let us partition $\Z^{d}$ into half-spaces. Denote by $\overline{\mathfrak{u}\mathfrak{v}}$ the geodesic of $S_{d-1}$ joining the two deterministic boundary points $\mathfrak{u},\mathfrak{v}\in S_{d-1}$. In this case, this is exactly the diameter in the ball, since the points $\mathfrak{u}$ and $\mathfrak{v}$ are antipodal points, i.e. they are opposite through the centre. For every $x\in\mathfrak{s}(\mathfrak{u},\mathfrak{v})=\Z^{d}$, consider the hyperplane which passes through $x$ and is orthogonal to $\overline{\mathfrak{u}\mathfrak{v}}$. This hyperplane cuts $\Z^{d}$ into two disjoint spaces $\Gamma_{+}$ and $\Gamma_{-}$, containing $\mathfrak{u}$ and $\mathfrak{v}$, respectively. Hence, $\Gamma=\Z^{d}$ is partitioned into half-spaces $\Gamma_{+}$ and $\Gamma_{-}$, which are $\Z^{d}$-equivariant. From the above, it follows that all the assumptions needed in the half-space method hold in the case of a Euclidean lattice $\Z^{d}$. Now, we apply Theorem \ref{PoissonTheorem}.

By Theorem \ref{ConvergenceTheorem} each of the random walks $Z_{n}$ and $\check{Z}_{n}$ starting at $id$ converges almost surely to an $\Omega$-valued random variable, where $\Omega$ is defined as in \eqref{omega}, with $S_{d-1}$ instead of $\partial\Gamma$. Nevertheless, the only ``active'' points of non-zero $\nu_{\infty}$- and $\check{\nu}_{\infty}$-measure on $S_{d-1}$ are  $\mathfrak{u}=m/|m|$ and $\mathfrak{v}=-m/|m|$, respectively. More precisely, $\Omega$ can be written as
\begin{equation}\label{EuclideanOmega}
\Omega =\Big{(}\mathcal{C}_{\mathfrak{u}}\times\{\mathfrak{u}\}\Big{)}\cup\Big{(}\mathcal{C}_{\mathfrak{v}}\times\{\mathfrak{v}\}\Big{)}, 
\end{equation}
where $\mathcal{C}_{\mathfrak{u}}$ (respectively, $\mathcal{C}_{\mathfrak{v}}$) is the set of all configurations accumulating only at $\mathfrak{u}$ (respectively, $\mathfrak{v}$).

If $\mu_{\infty}$  and $\check{\mu}_{\infty}$ are the limit distributions of $Z_{n}$ and $\check{Z}_{n}$ on $\Omega$, then the spaces $(\Omega,\mu_{\infty})$ and $(\Omega,\check{\mu}_{\infty})$ are $\mu$- and $\check{\mu}$- boundaries of the respective random walks. Take $b_{+}=(\phi_{+},\mathfrak{u})$, $b_{-}=(\phi_{-},\mathfrak{v})\in\Omega$, where $\phi_{+}$ and $\phi_{-}$ are the limit configurations of $Z_{n}$ and $\check{Z}_{n}$, respectively, and $\mathfrak{u},\mathfrak{v}\in\partial\Gamma$ are their only respective accumulation points. Define the configuration $\Phi(b_{+},b_{-},x)$ like in \eqref{StripConfiguration}, and the strip $S(b_{+},b_{-})$ exactly like in \eqref{LamplighterStrip}. From Theorem \ref{PoissonTheorem}, $S(b_{+},b_{-})$ satisfies the conditions from Proposition \ref{StripCriterion}, and it follows that the space $(\Omega,\mu_{\infty})$, with $\Omega$ as in \eqref{EuclideanOmega} is the Poisson boundary of the lamplighter random walk $Z_{n}$ over $G=\Z_{2}\wr \Gamma$.
\end{proof}
\paragraph*{Final remarks.}
One can also apply the method in order to find the Poisson boundary of lamplighter random walks over polycyclic groups, nilpotent groups, discrete groups of semi-simple Lie groups. Another application of the method is the determination of
the Poisson boundary of random walks over ``iterated'' lamplighter groups. That is, we consider our base group as being $\Z_{2}\wr\Gamma$ and we construct a new lamplighter group $\Z_{r}\wr(\Z_{r}\wr\Gamma)$ over $\Z_{r}\wr\Gamma$. The interesting fact here is that, the geometry of the group $\Z_{r}\wr\Gamma$ is completely different from that one of $\Gamma$. For instance, when $\Gamma$ is a group with infinitely many ends, $\Z_{r}\wr\Gamma$ has only one end. Our method still works, that is, we start with a strip in the lamplighter graph  $\Z_{r}\wr X$ and we lift it to a ''bigger`` one in $\Z_{r}\wr (\Z_{r}\wr X)$, following the steps of our method.

\paragraph*{Acknowledgements}
I am grateful to Wolfgang Woess for numerous numerous fruitful disscusions and for his help during the writing of this manuscript, and also to Vadim Kaimanovich for several hints and useful remarks regarding the content and exposition. I would also like to thank to the referee for careful reading, suggestions and corrections that helped to improve the paper.

\begin{small}\end{small}

\end{document}